\documentclass[12pt]{article}
\usepackage{amsmath,amssymb,amsthm}
\usepackage{amsfonts, amsmath}
\textheight24.4cm 
\textwidth16.2cm
\oddsidemargin0cm
\parindent0pt

\newtheorem{theorem}{Theorem}[section]
\newtheorem{lemma}{Lemma}[section]

\newtheorem{proposition}{Proposition}[section]
\theoremstyle{definition}
\newtheorem{definition}{Definition}[section]
\theoremstyle{remark}

\numberwithin{equation}{section}

\topmargin-1.5cm

 \title{A mathematically derived definitional/semantical theory of truth 
} 
 
\author {S. Heikkil\"a\\
Department of Mathematical Sciences, University of Oulu\\
BOX 3000, FIN-90014, Oulu, Finland\\
E-mail: sheikki@cc.oulu.fi}
\begin{document}
\maketitle 

\noindent
\begin{abstract} 
\noindent
Ordinary and transfinite recursion and induction and ZF set theory are used to construct from a fully interpreted object language and from an  extra formula a new language. It is fully interpreted under a suitably defined interpretation. This interpretation is equivalent to the interpretation by meanings of sentences if the object language is so interpreted. The added formula provides a truth predicate for the constructed language. The so obtained theory of truth satisfies the norms presented  in  Hannes Leitgeb's paper 'What Theories of Truth Should be Like (but Cannot be)'.  
 
\vskip12pt

\noindent {{\bf MSC:} 03B10, 03D80, 47H04, 47H10, 97M80}
\vskip12pt

\noindent{\bf Keywords:} language, sentence, meaning, interpretation, G\"odel number, truth.

\end{abstract}


 \section{Introduction}\label{S1} 
Theories of truth are presented for languages.
Based on 'Chomsky Definition' (cf. \cite{[C]}) a language is assumed to be a countably infinite set of well-formed sentences, each of finite length, and constructed out of a finite or a countably infinite set of symbols. 
\smallskip

(i) A theory of syntax of a language is formed by symbols, and rules to construct well-formed sentences. Symbols consist of letters, parentheses, commas, dots, constants containing natural numbers, terms containing numerals, and logical symbols $\neg$ (not), $\vee$ (or), $\wedge$ (and), $\rightarrow$ (implies),  $\leftrightarrow$ (if and only if), $\forall$ (for all) and $\exists$ (exist). If $A$ and $B$ are (denote) sentences, so are $\neg A$, $A\vee B$, $A\wedge B$, $A\rightarrow B$ and $A\leftrightarrow B$.  If $P(x)$ is a formula of a language, and $X_P$ is a set of terms, then $P$ is called a predicate with domain $X_P$ if $P(x)$ is a sentence of that language for each assignment of a term of $X_P$ into $x$ (shortly, for each $x\in X_P$). $\forall xP(x)$ and $\exists xP(x)$ are then sentences of the language.	If $P$ has several free variables $x_1,\dots,x_m$, then  $P(x_1,\dots,x_m)$ is denoted by $P(x)$, and the sentences
$\forall xP(x)$ and $\exists xP(x)$ stand for universal and existential closures $\forall\cdots\forall P(x_1,\dots,x_m)$ and $\exists\cdots\exists P(x_1,\dots,x_m)$. Symbols may contain a lexicon of a first-order predicate logic (cf. \cite[Definition II.5.2]{[Ku]}).
\smallskip

An interpretation of  sentences of a language is also needed. 

(ii)  A language is called fully interpreted, if every sentence is interpreted either as true or as false, and if the interpretation of those sentences which contain logical symbols satisfy following rules of classical logic when $A$ and $B$ denote sentences of the language ('iff' means 'if and only if'): 
$A$ is true iff $\neg A$ is false, and $A$ is false iff $\neg A$ is true; $A\vee B$ is true iff $A$ or $B$ is true, and false iff  $A$ and $B$ are false; $A\wedge B$ is true iff  $A$ and $B$ are true, and false iff $A$ or $B$ is false; $A\rightarrow B$ is true iff $A$ is false or $B$ is true, and false iff $A$ is true and $B$ is false; $A\leftrightarrow B$ is true iff  $A$ and $B$ are both true or both false, and false iff $A$ is true and $B$ is false or $A$ is false and $B$ is true. If $P$ is a predicate with domain $X_P$, then 
$\forall xP(x)$ is true iff $P(x)$ is true for every $x\in X_P$, and false iff $P(x)$ is false for some $x\in X_P$;
$\exists xP(x)$ is true iff $P(x)$ is true for some $x\in X_P$, and false iff $P(x)$ is false for every $x\in X_P$.

Any first-order formal language equipped with a consistent theory interpreted by a countable model, and containing natural numbers and numerals, is a fully interpreted language in the above sense. A classical example is the language of arithmetic  with its standard interpretation. Another example is the first-order language of set theory,  
the interpretation being determined by the  minimal model  constructed in \cite{[4]} for ZF set theory. 
\smallskip

'Truth should be expressed by a predicate' is the first requirement presented in \cite{Lei07} for theories of truth.
Many languages, e.g., the above mentioned, don't have such a predicate. Therefore we construct from such a language $L$, from its predicates, and from sentences induced by an additional formula $T(x)$ a new language $\mathcal L$,
and choose a fixed  G\"odel numbering to its sentences.  To each proper subset $U$ of the set $D$ of those G\"odel numbers we  
construct other subsets $G(U)$ and $F(U)$ of $D$.   

In Appendix it is shown that there is the smallest subset $U$ of $D$ which is consistent, i.e., for no  sentence $A$ of $\mathcal L$ the G\"odel numbers of both $A$ and $\neg A$ are in $U$, and  for which $U=G(U)$. The sentences of $\mathcal L$ whose G\"odel numbers are in $G(U)$ or in $F(U)$ and the symbols of $L$ form a language $\mathcal L^0$ that contains $L$.
A sentence of $\mathcal L^0$ is interpreted as true iff its G\"odel number is in $G(U)$, and as false iff its G\"odel number is in $F(U)$. In this interpretation $\mathcal L^0$ is shown to be fully interpreted, if the object language $L$ is fully interpreted. If $L$ is fully interpreted by meanings of its sentences, so is $\mathcal L^0$,
and this interpretation is proved to be equivalent to that defined above. 
Moreover,  the interpretations of $L$ and  $\mathcal L^0$ are
 compatible  in $L$, and $T$ is a truth predicate of $\mathcal L^0$ when its domain $X_T$ is the set of numerals of G\"odel numbers of all sentences of $\mathcal L^0$. This provides a theory of truth for $\mathcal L^0$.  
That theory is shown to satisfy all the norms presented in \cite{Lei07} for truth theories.

Ordinary and transfinite recursions and inductions and ZF set theory for sets of natural numbers are main tools in constructions and proofs.  
As for these tools see, e.g., \cite{[Ku]}.


\section{Recursive constructions}\label{S2} 

Let $L$ be a fully interpreted  language without a truth predicate.  Construct a language $\mathcal L_0$ as follows: Its base language is formed by $L$, an extra formula  $T(x)$ and its assignments when $x$ goes through all numerals.
Fix a G\"odel numbering to the base language. The  G\"odel number of a sentence (denoted by) $A$ is denoted by \#$A$, and the numeral of \#$A$ by $\left\lceil A\right\rceil$.
If $P$ is a predicate of $L$ with domain $X_P$, then $P(x)$ is a sentence of $L$ for each  $x\in X_P$, and $\left\lceil P(x)\right\rceil$ is the numeral of its G\"odel number. Thus  
$T(\left\lceil P(x)\right\rceil)$ is a sentence of $\mathcal L_0$ for each $x\in X_P$, whence  $T(\left\lceil P(\cdot)\right\rceil)$ is a predicate of $\mathcal L_0$ with domain $X_P$.  The construction of $\mathcal L_0$ is completed by adding to it sentences $\forall xT(x)$, $\exists xT(x)$,  
$\forall xT(\left\lceil T(x)\right\rceil)$ and $\exists xT(\left\lceil T(x)\right\rceil)$, and sentences 
$\forall xT(\left\lceil P(x)\right\rceil)$ and $\exists xT(\left\lceil P(x)\right\rceil)$  for every predicate $P$ of $L$. 
\smallskip

When a language $\mathcal L_n$, $n\in\mathbb N_0 =\{0,1,2,\dots\}$, is defined, let $\mathcal L_{n+1}$ be the language which  is formed by the  
 sentences $A$, $\neg A$, $A\vee B$, $A\wedge B$, $A\rightarrow B$ and $A\leftrightarrow B$, where $A$ and $B$ go through all sentences of $\mathcal L_n$. The language $\mathcal L$ is defined as the union of languages $\mathcal L_n$, $n\in\mathbb N_0$. 
Extend the G\"odel numbering of the base language to $\mathcal L$, and denote by $D$ the set of those G\"odel numbers.  
If $A$ is a sentence of $\mathcal L$, denote by '$x=\left\lceil A\right\rceil$' the assignment of the numeral $\left\lceil A\right\rceil$ of the G\"odel number \#$A$ of $A$ into $x$.  

Denote by $\mathcal P$ the set of all predicates of $L$. Divide $\mathcal P$ into three disjoint subsets. 

\begin{equation}\label{E50}
\begin{cases}
\mathcal P_1=\{P\in \mathcal P: P(x) \hbox{ is a true sentence of $L$ for every } x\in X_P \},\\
\mathcal P_2=\{P\in \mathcal P: P(x) \hbox{ is a false sentence of $L$ for every } x\in X_P \},\\
\mathcal P_3=\{P\in \mathcal P: P(x) \hbox{ is a true sentence of $L$ for some but not for all } x\in X_P\}.
\end{cases}
\end{equation}
 
Given a proper subset $U$ of  $D$, construct new subsets $G(U)$ and $F(U)$ of $D$ as follows. Define 
\begin{equation}\label{E20}
\begin{cases}
D_1(U)=\{\hbox{\#$T(x)$: $x=\left\lceil A\right\rceil$, where $A$ is a sentence of $\mathcal L$ and  \#$A$  is in $U$}\},\\
D_2(U)=\{\hbox{\#[$\neg T(x)$]: $x=\left\lceil A\right\rceil$, where $A$ is a sentence of $\mathcal L$ and  \#[$\neg A$]  is in $U$}\},\\
D_1=\{\#[\neg\forall x T(x)],\#[\exists xT(x)],\#[\neg(\forall xT(\left\lceil T(x)\right\rceil))], \#[\exists xT(\left\lceil T(x)\right\rceil)]\},\\
D_2=\{\#[\forall xT(\left\lceil P(x)\right\rceil)], \#[\exists xT(\left\lceil P(x)\right\rceil)]:P\in\mathcal P_1\},\\
D_3=\{\#[\neg(\forall xT(\left\lceil P(x)\right\rceil))], \#[\neg(\exists xT(\left\lceil P(x)\right\rceil))]:P\in\mathcal P_2\},\\
D_4=\{\#[\neg(\forall xT(\left\lceil P(x)\right\rceil))], \#[\exists xT(\left\lceil P(x)\right\rceil)]:P\in\mathcal P_3\}.
\end{cases}
\end{equation}

Subsets  $G_n(U)$, $n\in\mathbb N_0$, of $D$ are defined 
 recursively as follows.

\begin{equation}\label{E201}
G_0(U)=\begin{cases} W =\{\#A: A \hbox { is a true sentence of $L\}$ if $U=\emptyset$ (the empty set)},\\
    W\cup D_1(U)\cup D_2(U)\cup D_1\cup D_2\cup D_3\cup D_4		\hbox{ if $\emptyset\subset U\subset D$}.
\end{cases}
\end{equation}										

Let  $A$ and $B$ denote sentences of $\mathcal L$. When $n\in\mathbb N_0$, and $G_n(U)$ is defined, define
\begin{equation}\label{E203}
\begin{cases}
G_n^0(U)=\{\#[\neg(\neg A)]:\#A \hbox{ is in } G_n(U)\},\\
G_n^1(U)=\{\#[A\vee B]:\#A \hbox{ or \#$B$ is in } G_n(U)\},\\
G_n^2(U)=\{\#[A\wedge B]:\#A \hbox{ and \#$B$ are in } G_n(U)\},\\
G_n^3(U)=\{\#[A\rightarrow B]:\#[\neg A] \hbox{ or \#$B$ is in } G_n(U)\},\\
G_n^4(U)=\{\#[A\leftrightarrow B]:\hbox{both \#$A$  and \#$B$  or both \#[$\neg A$] and \#[$\neg B$] are in } G_n(U) \},\\
G_n^5(U)=\{\#[\neg(A\vee B)]:\#[\neg A] \hbox{ and \#[$\neg B$] are in } G_n(U)\},\\
G_n^6(U)=\{\#[\neg(A\wedge B)]:\#[\neg A] \hbox{ or \#$[\neg B]$ is in } G_n(U)\},\\
G_n^7(U)=\{\#[\neg(A\rightarrow B)]:\#A \hbox{ and \#[$\neg B$] are in } G_n(U)\},\\
G_n^8(U)=\{\#[\neg(A\leftrightarrow B)]:\hbox{\#$A$ and \#[$\neg B]$, or \#[$\neg A$] and \#$B$ are in } G_n(U) \},
\end{cases}
\end{equation}
and 
\begin{equation}\label{E204}
G_{n+1}(U)=G_n(U)\cup \bigcup_{k=0}^8 G_n^k(U).
\end{equation}
The above constructions imply that
 $G_n(U)\subseteq G_{n+1}(U)\subset D$ and  $G_n^k(U)\subseteq G_{n+1}^k(U)$ for all $n\in\mathbb N_0$ and $k=0,\dots,8$.
Define subsets $G(U)$ and $F(U)$ of $D$ by
\begin{equation}\label{E21}
G(U)=\bigcup_{n=0}^\infty G_n(U), \ \hbox{ and }  F(U)=\{\#A: \#[\neg A]\in G(U)\}.
\end{equation}


\section{A language, its interpretations and their properties}\label{S3}

Let $L$, $\mathcal L$ and $D$  be as in Section \ref{S2}, and let $G(U)$ and $F(U)$, $U\subset D$, be defined by \eqref{E21}. 
Recall that a subset $U$  of $D$ is consistent if there is no sentence $A$ in $\mathcal L$ such that both \#$A$ and \#[$\neg A$] are in $U$. The existence of the smallest consistent subset $U$ of $D$ which satisfies $U=G(U)$ is proved in Appendix by a transfinite recursion method.  
\begin{definition}\label{D1} Let $U$ be the smallest consistent subset of $D$ which satisfies $U=G(U)$. 
 Denote by $\mathcal L^0$ a language which is the set of  all those sentences of $\mathcal L$ whose G\"odel numbers are in $G(U)$ or in $F(U)$.
\end{definition}
\begin{itemize}
\item[(I)]
A theory of syntax for $\mathcal L^0$ is determined by that of the object language $L$, by an extra formula $T(x)$ and its assignments added to $L$ to construct a base language, and by construction of sentences of $\mathcal L^0$ using ordinary and transfinite recursion methods. 
\end{itemize}

\subsection{Definitional interpretation}\label{S31}

An interpretation to sentences of $\mathcal L^0$ is defined as follows.
\begin{itemize}
 \item[(II)] A sentence of $\mathcal L^0$ is interpreted as true iff its G\"odel number is in $G(U)$, and as false 
iff its G\"odel number is in $F(U)$.
\end{itemize}

\begin{lemma}\label{L31} Let the language $\mathcal L^0$ be defined by Definition \ref{D1} and interpreted by (II). Then a sentence of $L$ is true (resp. false) in the interpretation of $L$ iff it is true (resp. false) in the interpretation (II). 
\end{lemma}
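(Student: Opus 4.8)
The plan is to prove both directions by tracing through the recursive construction in Section~\ref{S2}, using the fact that $U = G(U)$ is the smallest consistent fixed point. For the forward direction, suppose $A$ is a sentence of $L$ that is true in the interpretation of $L$. Then by definition $\#A \in W$, and since $W \subseteq G_0(U) \subseteq G(U)$ by \eqref{E201} and \eqref{E21}, the G\"odel number $\#A$ lies in $G(U)$, so $A$ is true under interpretation (II). Symmetrically, if $A$ is false in $L$, then $\neg A$ is a sentence of $L$ (the logical symbols of $L$ are inherited) and $\neg A$ is true in $L$ by the classical rules of part~(ii) of the introduction, so $\#[\neg A] \in W \subseteq G(U)$; hence $\#A \in F(U)$ by \eqref{E21}, and $A$ is false under (II).

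The converse direction is where the real content lies, and it is here that consistency of $U$ does the work. Suppose $A$ is a sentence of $L$ that is true under interpretation (II), i.e.\ $\#A \in G(U)$; I must show $A$ is true in the interpretation of $L$. Arguing by contradiction, if $A$ were not true in $L$ then, $L$ being fully interpreted, $A$ is false in $L$, so $\neg A$ is true in $L$, giving $\#[\neg A] \in W \subseteq G(U)$. But then both $\#A$ and $\#[\neg A]$ lie in $G(U) = U$, contradicting the consistency of $U$. Hence $A$ is true in $L$. The argument that ``$A$ false under (II)'' implies ``$A$ false in $L$'' is the mirror image: $\#A \in F(U)$ means $\#[\neg A] \in G(U)$, and if $A$ were true in $L$ then $\#A \in W \subseteq G(U)$, again violating consistency of $U$.

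The main obstacle — and the step deserving the most care — is the converse, specifically the implicit claim that the only way a G\"odel number of a sentence of $L$ can enter $G(U)$ is via $W$. One should check that none of the other generators in \eqref{E201}, namely $D_1(U)$, $D_2(U)$, $D_1$, $D_2$, $D_3$, $D_4$, nor any of the clauses $G_n^k(U)$ in \eqref{E203}, produces the G\"odel number of a sentence belonging to $L$ itself: the sets $D_i(U)$ and $D_i$ consist of G\"odel numbers of sentences built with the extra predicate $T$, which are sentences of $\mathcal L_0 \setminus L$, and the clauses $G_n^k$ produce Boolean combinations $\neg(\neg A)$, $A \vee B$, etc., which (except in degenerate recursions) are strictly longer than, hence distinct from, the sentences of $L$ they are built from. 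However, for the contradiction argument above this delicate bookkeeping is actually unnecessary: it suffices that $W \subseteq G(U) = U$ and that $U$ is consistent, so I would present the streamlined contradiction proof and relegate any remarks about which generators touch $L$ to a side observation. A final subtlety is ensuring $\neg A$ is genuinely a sentence of $L$ whenever $A$ is — this follows from the stipulation in part~(i) that $\neg A$ is a sentence whenever $A$ is, together with the classical truth rules of part~(ii) that force $\neg A$ true in $L$ exactly when $A$ is false in $L$.
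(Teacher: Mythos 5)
Your proof is correct, but its key direction runs along a genuinely different line from the paper's. The paper argues by a single chain of equivalences whose crucial link is the assertion that, for a sentence $A$ of $L$, \#$A$ lies in $G(U)$ iff it lies in $W$ (and likewise \#[$\neg A$] lies in $G(U)$ iff it lies in $W$), justified only by the phrase ``by the construction of $G(U)$''; taken literally this needs a small induction along \eqref{E201}--\eqref{E204}, since the clauses \eqref{E203} can place Gödel numbers of compound $L$-sentences (e.g.\ $A\vee B$ with $A,B$ in $L$) into $G(U)$, and one must check that any such sentence is nevertheless true in $L$ and hence already in $W$. You sidestep that structural bookkeeping entirely: for the converse you use only $W\subseteq G_0(U)\subseteq G(U)=U$, the consistency of $U$ guaranteed by Definition \ref{D1}, and the fact that $L$ is fully interpreted --- if \#$A\in U$ while $A$ is false in $L$, then \#[$\neg A$]$\in W\subseteq U$ contradicts consistency, and symmetrically for falsity via \eqref{E21}. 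What the paper's route buys is independence from consistency of $U$ (it would work for any fixed point built over $W$ once the structural claim is established); what yours buys is brevity and robustness, since it needs no analysis of which generators can touch $L$. One small remark: your aside that the clauses $G_n^k(U)$ produce sentences ``strictly longer than, hence distinct from'' their constituents addresses the wrong worry --- the delicate point is not that a compound might coincide with one of its parts, but that a compound of $L$-sentences is itself an $L$-sentence whose membership in $W$ is what has to be verified; since you explicitly discard that route in favour of the consistency argument, this does not affect the correctness of your proof.
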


\begin{proof} Let $A$ denote a sentence of $L$. $A$ is true in the interpretation (II) iff \#$A$ is in $G(U)$ iff (by the construction of $G(U)$) \#$A$ is in $W$ iff $A$ is true in the interpretation of $L$. 
$A$ is false in the interpretation (II) iff \#$A$ is in $F(U)$ iff 
(by \eqref{E21}) \#[$\neg A$]
is in $G(U)$ iff ($\neg A$ is a sentence of $L$) \#[$\neg A$] is in $W$
iff $\neg A$ is true in the interpretation of $L$ iff (by  negation rule) $A$ is false in the interpretation of $L$. 
\end{proof}

\begin{proposition}\label{P1} 
Let $\mathcal L^0$ be defined by Definition \ref{D1}. Then the interpretation rules given in (ii) of Introduction are satisfied in the  interpretation (II). $T$ is a predicate of $\mathcal L^0$ when its domain is defined by 
\begin{equation}\label{E32}
X_T=\{x: x=\left\lceil A\right\rceil, \ \hbox{ where $A$ is a sentence of} \ \mathcal L^0\}.
\end{equation}
\end{proposition}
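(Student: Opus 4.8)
The plan is to separate the two assertions; the second is brief, while the first is where the work lies. For the first assertion I would derive, from the recursive definitions \eqref{E203}, \eqref{E204}, \eqref{E21} together with unique readability of the syntax of $\mathcal L$, a membership criterion for $G(U)$ attached to each logical shape, and then read off each biconditional of (ii) by passing through the interpretation (II) and Lemma~\ref{L31}. Concretely, I would first prove, for all sentences $A,B$ of $\mathcal L$, the natural equivalences: $\#[A\vee B]\in G(U)$ iff $\#A\in G(U)$ or $\#B\in G(U)$, and the corresponding equivalences for $\wedge,\rightarrow,\leftrightarrow$; the four equivalences for the negated compounds $\#[\neg(A\vee B)],\dots,\#[\neg(A\leftrightarrow B)]\in G(U)$, phrased in terms of $\#[\neg A],\#[\neg B]\in G(U)$ (equivalently, by \eqref{E21}, in terms of $F(U)$-membership of $A$ and $B$); and $\#[\neg(\neg A)]\in G(U)$ iff $\#A\in G(U)$. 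In each case the ``if'' direction is immediate from the matching clause of \eqref{E203} together with $G_n(U)\subseteq G_{n+1}(U)\subseteq G(U)$ (by \eqref{E204}) and $G(U)=\bigcup_n G_n(U)$. For the ``only if'' direction I would take the least $n$ with the compound's G\"odel number in $G_n(U)$: if $n\ge 1$ that number lies in some $G_{n-1}^k(U)$, and unique readability forces $k$ to be exactly the clause designed for that shape, so the required components already lie in $G_{n-1}(U)\subseteq G(U)$; if $n=0$ the number lies in $W$ (it cannot lie in $D_1(U),D_2(U),D_1,D_2,D_3$ or $D_4$, whose elements are atomic, negated-atomic, quantified or negated-quantified sentences), so the compound is a true sentence of $L$, and the classical rules of (ii) for $L$, together with Lemma~\ref{L31} and closure of $L$ under subsentences, yield the components. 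Transcribing through (II) then gives, e.g., ``$A\vee B$ is true'' iff ``$\#[A\vee B]\in G(U)$'' iff ``$\#A\in G(U)$ or $\#B\in G(U)$'' iff ``$A$ is true or $B$ is true'', and similarly for the other connectives; the rule ``$A$ is false iff $\neg A$ is true'' is immediate from the definition of $F(U)$ in \eqref{E21}, and ``$A$ is true iff $\neg A$ is false'' follows from the double-negation equivalence, since $\neg A$ is false iff $\#[\neg A]\in F(U)$ iff $\#[\neg(\neg A)]\in G(U)$ iff $\#A\in G(U)$.

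For the quantifier rules I would invoke the auxiliary sets of \eqref{E20}, which (for this $U$) all lie in $G_0(U)\subseteq G(U)$ by \eqref{E201}, the partition \eqref{E50} of $\mathcal P$, the fixed-point identity $U=G(U)$, and the consistency of $U$. If $P$ is a predicate of $L$, then $\forall xP(x)$, $\exists xP(x)$ and their negations are sentences of $L$ and, by unique readability, can enter $G(U)$ only through $W$, so the rules follow from the quantifier clauses of (ii) applied to $L$ and from Lemma~\ref{L31}. For $T$ itself and for the predicates $T(\left\lceil P(\cdot)\right\rceil)$ and $T(\left\lceil T(\cdot)\right\rceil)$ I would use the identities $\#T(\left\lceil A\right\rceil)\in D_1(U)$ iff $\#A\in U=G(U)$, and $\#[\neg T(\left\lceil A\right\rceil)]\in D_2(U)$ iff $\#[\neg A]\in U$; thus $T(\left\lceil A\right\rceil)$ is true (resp.\ false) in (II) exactly when $A$ is. Combined with $\#[\neg\forall xT(x)],\#[\exists xT(x)],\#[\neg(\forall xT(\left\lceil T(x)\right\rceil))],\#[\exists xT(\left\lceil T(x)\right\rceil)]\in D_1$, and with the placement of the $\mathcal P_1$-, $\mathcal P_2$-, $\mathcal P_3$-witnesses of \eqref{E50} in $D_2$, $D_3$, $D_4$ respectively, this lets me check each quantifier biconditional by cases; its ``false''-side clause holds vacuously on both sides because consistency of $U=G(U)$ prevents any G\"odel number from lying in both $G(U)$ and $F(U)$. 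For the second assertion: by Definition~\ref{D1} every sentence $A$ of $\mathcal L^0$ has $\#A\in G(U)\cup F(U)$; if $\#A\in G(U)=U$ then $\#T(\left\lceil A\right\rceil)\in D_1(U)\subseteq G(U)$, and if $\#A\in F(U)$ then $\#[\neg A]\in U$, so $\#[\neg T(\left\lceil A\right\rceil)]\in D_2(U)\subseteq G(U)$, i.e.\ $\#T(\left\lceil A\right\rceil)\in F(U)$; either way $T(\left\lceil A\right\rceil)$ is a sentence of $\mathcal L^0$, so $T$ is a predicate of $\mathcal L^0$ with domain $X_T$ given by \eqref{E32}.

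The step I expect to be the main obstacle is making the ``only if'' half of the connective argument rigorous, namely the stage-by-stage analysis above. It rests on two facts that must be secured: unique readability of the extended syntax of $\mathcal L$, so that a sentence of a given shape is never produced by a clause of \eqref{E203} meant for a different shape and never coincides with an element of $D_1(U),D_2(U),D_1,D_2,D_3,D_4$; and closure of $L$ under subsentences together with its obeying the classical rules of (ii), which is what reduces the base case ($n=0$, membership in $W$) to the object language via Lemma~\ref{L31}. Were unique readability to fail, a compound could land in $G(U)$ ``for the wrong reason'' and the corresponding biconditional of (ii) would be in jeopardy.
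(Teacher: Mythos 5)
Most of your proposal tracks the paper's own proof: the same least-$n$ stage analysis of \eqref{E203}--\eqref{E204} for the connective rules (with the base case pushed into $W$ and discharged through Lemma \ref{L31}), the same identities $\#T(\left\lceil A\right\rceil)\in D_1(U)$ iff $\#A\in U$ and $\#[\neg T(\left\lceil A\right\rceil)]\in D_2(U)$ iff $\#[\neg A]\in U$, the same use of $D_1$--$D_4$, the partition \eqref{E50}, consistency and $U=G(U)$ for the quantifier rules, and essentially the same short argument that $T$ is a predicate with domain \eqref{E32}. Your explicit appeal to unique readability only makes visible something the paper uses tacitly (for instance when it passes from ``$\#[\neg T(x)]\notin D_2(U)$'' to ``$\#[\neg T(x)]\notin G(U)$'').

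The genuine flaw is your closing claim that the ``false''-side clause of each quantifier biconditional holds vacuously on both sides because $G(U)$ and $F(U)$ are disjoint. That is wrong exactly where the paper's proof does real work: $\forall xT(x)$ and $\forall xT(\left\lceil T(x)\right\rceil)$ \emph{are} false in (II) (their negations lie in $D_1\subseteq G_0(U)$); for $P\in\mathcal P_2$ both $\forall xT(\left\lceil P(x)\right\rceil)$ and $\exists xT(\left\lceil P(x)\right\rceil)$ are false (via $D_3$); for $P\in\mathcal P_3$ the universal sentence is false (via $D_4$). In these cases the false clause is substantive: e.g.\ for (t7) you must exhibit some $x\in X_T$ with $T(x)$ false, which the paper does by taking $x=\left\lceil A_2\right\rceil$ for a sentence $A_2$ with $\#[\neg A_2]\in U$, so that $\#[\neg T(x)]\in D_2(U)\subseteq G(U)$; for $P\in\mathcal P_2$ you must check that $T(\left\lceil P(x)\right\rceil)$ is false for every $x\in X_P$, using $\#[\neg P(x)]\in W\subseteq U$. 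Moreover, even in the cases where both sides of the false clause do fail (e.g.\ $P\in\mathcal P_1$, or (t6)), disjointness of $G(U)$ and $F(U)$ is not enough: you must also show that the relevant instances are \emph{not} false, i.e.\ $\#[\neg T(\left\lceil P(x)\right\rceil)]\notin G(U)$, which requires consistency of $U$ (so $\#[\neg P(x)]\notin U$) combined with your shape argument that such a number could only enter $G(U)$ through $D_2(U)$. Once the false clauses are argued case by case along these lines --- which is precisely the paper's treatment of (t6), (t7), (tt6), (tt7), (tp6), (tp7) --- your proof closes.
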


\begin{proof} We shall first derive the following auxiliary rule.

(t0) If $A$ is a sentence of $\mathcal L^0$, then $\neg(\neg A)$ is true iff $A$ is true.

To prove (t0), assume first that $\neg(\neg A)$ is true.  Then \#[$\neg(\neg A)$] is in $G(U)$. By \eqref{E21} \#[$\neg(\neg A)$] is in  $G_n(U)$ for some $n\in\mathbb N_0$. If \#[$\neg(\neg A)$] is in $G_0(U)$ then 
 \#[$\neg(\neg A)$] is in $W$, so that sentence $\neg(\neg A)$ is true in the interpretation of $L$. This holds by the negation rule iff $\neg A$ is false in the interpretation of $L$ iff $A$ is true in the interpretation of $L$. Thus \#$A$ is in $W\subset G(U)$, whence $A$ is true.
Assume next that the least of those $n$ for which  \#[$\neg(\neg A)$] is in $G_n(U)$ is $> 0$. The definition of $G_n(U)$ implies that if  \#[$\neg(\neg A)$] is in  $G_n(U)$, then  \#[$\neg(\neg A)$] is in  $G_{n-1}^0(U)$, so that \#$A$ is in $G_{n-1}(U)$, and hence in $G(U)$, i.e., $A$ is true.

Thus $A$ is true if $\neg(\neg A)$ is true.

Conversely, assume that  $A$ is true. Then \#$A$ is in $G(U)$, so that \#$A$  is in $G_n(U)$ for some $n\in\mathbb N_0$. Thus  \#[$\neg(\neg A)$] is in  $G_n^0(U)$, and hence
in $G_{n+1}(U)$. Consequently, \#[$\neg(\neg A)$] is in $G(U)$, whence $\neg(\neg A)$ is true. This concludes the proof of (t0).
\smallskip

Rule (t0) is applied next to prove
\smallskip

\item(t1) Negation rule: $A$ is true iff $\neg A$ is false, and $A$ is false iff $\neg A$ is true.
\smallskip 

Let $A$ be a sentence of $\mathcal L^0$. 
Then  $A$ is true iff (by (t0)) $\neg(\neg A)$ is true iff \#[$\neg(\neg A)$] is in $G(U)$ iff (by \eqref{E21}) 
\#[$\neg A$] is in $F(U)$ iff $\neg A$ is false.

$A$ is false iff \#$A$ is in $F(U)$ iff (by \eqref{E21}) \#[$\neg A$] is in $G(U)$ iff $\neg A$ is true.
Thus (t1) is satisfied.
\smallskip
 
Next we shall prove
\smallskip

\item[(t2)] Disjunction rule: $A\vee B$  is true iff  $A$ or $B$ is true. $A\vee B$ false iff $A$ and $B$ are false. 
 \smallskip

Let $A$ and $B$ be sentences of $\mathcal L^0$. If $A$ or $B$ is true, i.e.,  \#$A$ or \#$B$ is in $G(U)$, there is by \eqref{E21} an $n\in\mathbb N_0$ such that \#$A$ or \#$B$ is in $G_n(U)$. Thus  \#[$A\vee B$] is in $G_n^1(U)$, and hence in $G(U)$, so that $A\vee B$ is true.

Conversely, assume that $A\vee B$ is true, or equivalently, \#[$A\vee B$] is in $G(U)$. Then there is by \eqref{E21} an $n\in\mathbb N_0$ such that \#[$A\vee B$] is in $G_n(U)$.
Assume first that $n=0$. If \#[$A\vee B$] is in $G_0(U)$,  it is in $W$.
Thus $A\vee B$ is true in the interpretation of $L$. Because $L$ is fully interpreted, then $A$ or $B$ is true in the interpretation of $L$, and hence also in the interpretation (II) by Lemma \ref{L31}.

Assume next that the least of those $n$ for which  \#[$A\vee B$] is in $G_n(U)$ is $> 0$. Then  \#[$A\vee B$] is in  $G_{n-1}^1(U)$, so that \#$A$ or \#$B$ is in $G_{n-1}(U)$, and hence in
$G(U)$, i.e., $A$ or $B$ is true.

Consequently, $A\vee B$ is true iff  $A$ or $B$ is true.

It follows from \eqref{E21} that
 \smallskip

(a) \#[$A\vee B$] is  in $F(U)$ iff  \#[$\neg(A\vee B)$] is in $G(U)$.
 \smallskip

If \#[$\neg(A\vee B)$] is in $G(U)$, there is by \eqref{E21} an $n\in\mathbb N_0$ such that \#[$\neg(A\vee B)$] is in $G_{n}(U)$.
Assume that $n=0$. If \#[$\neg(A\vee B)$] is in $G_0(U)$, it is in $W$.
Then  $\neg(A\vee B)$ is true and $A\vee B$ is false in the interpretation of $L$, so that $A$ and $B$ are false and $\neg A$ and $\neg B$ are true in the interpretation of $L$, i.e., \#[$\neg A$] and \#[$\neg B$] are in $W$, and hence in $G(U)$.

Assume next that the least of those $n$ for which  \#[$\neg(A\vee B)$] is in $G_n(U)$ is $> 0$. Then  \#[$\neg(A\vee B)$] is in  $G_{n-1}^5(U)$, so that \#[$\neg A$] and \#[$\neg B$] are in $G_{n-1}(U)$, and hence in
$G(U)$. Thus, \#[$\neg A$] and \#[$\neg B$] are in $G(U)$ if \#[$\neg(A\vee B)$] is in $G(U)$.

Conversely,  if \#[$\neg A$] and \#[$\neg B$] are in $G(U)$, there exist by \eqref{E21} $n_1,n_2\in\mathbb N_0$ such that \#[$\neg A$] is in $G_{n_1}(U)$ and \#[$\neg B$]  is in $G_{n_2}(U)$. Denoting $n=\max\{n_1,n_2\}$, then both \#[$\neg A$] and \#[$\neg B$] are in $G_n(U)$.
This result and the definition of $G_n^5(U)$ imply that \#[$\neg(A\vee B)$] is in $G_n^5(U)$, and hence in $G(U)$. Consequently,
 \smallskip

(b)  \#[$\neg(A\vee B)$] is in $G(U)$ iff \#[$\neg A$] and \#[$\neg B$] are in $G(U)$ iff (by \eqref{E21}) \#$A$ and \#$B$ are in $F(U)$. 
 \smallskip

Thus, by (a) and (b), \#[$A\vee B$] is  in $F(U)$ iff  \#$A$ and \#$B$ are in $F(U)$. But this means that $A\vee B$ is false iff $A$ and $B$ are false.
This concludes the proof of (t2). 
\smallskip

The proofs of the following rules are similar to the above proof of (t2).
\smallskip

(t3) Conjunction rule: $A\wedge B$ is true iff 
$A$ and $B$ are true. $A\wedge B$ is false iff 
$A$ or $B$ is false.

(t4) Implication rule: $A\rightarrow B$ is true iff 
$A$ is false or $B$ is true. $A\rightarrow B$ is  false iff 
$A$ is true and $B$ is false.

(t5) Biconditionality rule:
$A \leftrightarrow B$ is true iff $A$ and $B$ are both true or both false. $A \leftrightarrow B$ is  false iff  $A$ is true and 
$B$ is false or $A$ is false and $B$ is true.
\smallskip
 

Interpretation (II), the fact that $U=G(U)$ and the definition \eqref{E32} of $X_T$ imply that $T(x)$ is for each $x\in X_T$ a sentence, true or false, of $\mathcal L^0$.
Because $U$ is nonempty, then \#[$\exists xT(x)$] is in $G_0(U)$ by \eqref{E20} and \eqref{E201}, and hence in $G(U)$ by \eqref{E21}. Thus $\exists xT(x)$  is by (II) a true sentence of $\mathcal L^0$. If \#$A_1$ is  in $G(U)=U$, then \#$T(x)$ is by \eqref{E20} in $D_1(U)$, and hence in $G(U)$ when $x=\left\lceil A_1\right\rceil$.
This result, \eqref{E32} and (II) imply that $T(x)$ is true  when $x=\left\lceil A_1\right\rceil$, and hence for some $x\in X_T$.
 
Since \#[$\exists xT(x)$] is in $G(U)=U$, and since $U$ is consistent, then 
\#[$\neg\exists xT(x)$] is not in $U=G(U)$. This implies by \eqref{E21} that \#[$\exists xT(x)$] is not in $F(U)$, i.e., by  (II), $\exists xT(x)$ is not false. As shown above, $T(x)$ is  true for $x=\left\lceil A_1\right\rceil$.
Since \#$A_1$ is in $U$ which is consistent, then \#[$\neg A_1$] is  not in $U$. Thus it follows from \eqref{E20} that \#[$\neg T(x)$]  is not in $D_2(U)$, and hence not in $G(U)$ for $x=\left\lceil A_1\right\rceil$. 
The above results mean by \eqref{E21} and (II) that $T(x)$ is not false when $x=\left\lceil A_1\right\rceil$, and hence not false for every $x\in X_T$.

The above proof shows that $T$ satisfies the following rule.
\smallskip

(t6) $\exists xT(x)$ is true iff $T(x)$ is true for some $x\in X_T$. $\exists xT(x)$ is false iff  $T(x)$ is false for every $x\in X_T$.

$\forall xT(x)$ is false, because \#[$\neg\forall xT(x)$] is in $G_0(U)$, and hence in $G(U)$, so that \#[$\forall xT(x)$] is in $F(U)$ by \eqref{E21}. If \#[$\neg A_2$] is in $G(U)=U$, then \#[$\neg T(x)$] is by \eqref{E20} in  $D_2(U)$ 
when $x=\left\lceil A_2\right\rceil$. Thus \#[$\neg T(x)$] is in $G(U)=U$, so that $\neg T(x)$ is true, and hence $T(x)$ is false when $x=\left\lceil A_2\right\rceil$, and hence is false for some $x\in X_T$.

Because \#[$\neg\forall xT(x)$] is in $G(U)=U$ and since $U$ is consistent, then 
\#[$\forall xT(x)$] is not in $U=G(U)$ , whence $\forall xT(x)$ is  not true.

It is shown above that $T(x)$ is false when $x=\left\lceil A_2\right\rceil$. Because \#[$\neg A_2$] is in $U$ and $U$ is consistent, then \#$A_2$ is not in $U$. This implies that 
\#$T(x)$ is not in $D_1(U)$, and hence not in $G(U)=U$, i.e.,   $T(x)$ is not true when $x=\left\lceil A_2\right\rceil$, and hence
not true for every $x\in X_T$. 

The above results imply that $T$ satisfies the following rule.
\smallskip

(t7) $\forall xT(x)$ is true iff $T(x)$ is true for every  $x\in X_T$. $\forall xT(x)$ is  false iff $T(x)$ is false for some $x\in X_T$.
\smallskip

 It remains to show that the following properties hold.
\smallskip

(tt6) $\exists xT(\left\lceil T(x)\right\rceil)$ is true iff $T(\left\lceil T(x)\right\rceil)$ is true for some $x\in X_T$. $\exists xT(\left\lceil T(x)\right\rceil)$ is false iff  $T(\left\lceil T(x)\right\rceil)$ is false for every $x\in X_T$;
\smallskip

(tt7) $\forall xT(\left\lceil T(x)\right\rceil)$ is true iff $T(\left\lceil T(x)\right\rceil)$ is true for every  $x\in X_T$. $\forall xT(\left\lceil T(x)\right\rceil)$ is false iff 
$T(\left\lceil T(x)\right\rceil)$ is false for some $x\in X_T$;
\smallskip

and if $P$ is a predicate of $L$ with domain $X_P$, then 

(tp6) $\exists xT(\left\lceil P(x)\right\rceil)$ is true iff $T(\left\lceil P(x)\right\rceil)$ is true for some $x\in X_P$. $\exists xT(\left\lceil P(x)\right\rceil)$ false iff  $T(\left\lceil P(x)\right\rceil)$ is false for every $x\in X_P$;
\smallskip

(tp7) $\forall xT(\left\lceil P(x)\right\rceil)$ is true iff $T(\left\lceil P(x)\right\rceil)$ is true for every  $x\in X_P$. $\forall xT(\left\lceil P(x)\right\rceil)$ false iff 
$T(\left\lceil P(x)\right\rceil)$ is false for some $x\in X_P$.
\smallskip

To begin with properties (tp6) and (tp7), consider first the case when $P\in\mathcal P_1$. Then $P(x)$ is a true sentence of $L$ for every  $x\in X_P$.
Since $U$ is nonempty, then \#[$\exists xT(\left\lceil P(x)\right\rceil)$] and \#[$\forall xT(\left\lceil P(x)\right\rceil)$] are in $G_0(U)$ by \eqref{E20} and \eqref{E201}, and hence in $G(U)$. Thus $\exists xT(\left\lceil P(x)\right\rceil)$ and $\forall xT(\left\lceil P(x)\right\rceil)$ are by (II)  true sentences of $\mathcal L^0$.  

As shown above \#[$\exists xT(\left\lceil P(x)\right\rceil)$] and \#[$\forall xT(\left\lceil P(x)\right\rceil)$] are in  $G(U)=U$. Because $U$ is consistent, then \#[$\neg(\exists xT(\left\lceil P(x)\right\rceil))$] and 
\#[$\neg(\forall xT(\left\lceil P(x)\right\rceil))$] are not in $U=G(U)$. This implies by \eqref{E21} that \#[$\exists xT(\left\lceil P(x)\right\rceil)$] and \#[$\forall xT(\left\lceil P(x)\right\rceil)$] are  not in $F(U)$, and hence, by  (II), $\exists xT(\left\lceil P(x)\right\rceil)$ and $\forall xT(\left\lceil P(x)\right\rceil)$ are not false. 

Since every true sentence of $L$ is by Lemma \ref{L31} a true sentence of $\mathcal L^0$, then, by the interpretation (II), 
 \#$P(x)$ is in $G(U)=U$ for every $x\in X_P$.
Thus \#$T(\left\lceil P(x)\right\rceil)$ is  by  \eqref{E20} in $D_1(U)$, and hence in $G(U)$, for every $x\in X_P$.
Consequently, by (II),  $T(\left\lceil P(x)\right\rceil)$ is true  for every $x\in X_P$, and hence also for some $x\in X_P$.

As noticed above, \#$P(x)$ is in $G(U)=U$  for every  $x\in X_P$. Since $U$ is consistent, then  \#[$\neg P(x)$]  is not in $U$ for any $x\in X_P$. This result and \eqref{E20} imply that \#[$\neg T(\left\lceil P(x)\right\rceil)]$ is not in $D_2(U)$, and hence not in $G(U)$ for any $x\in X_P$. Thus \#[$T(\left\lceil P(x)\right\rceil)]$ is not in $F(U)$ for any $x\in X_P$.
This means by  (II) that $T(\left\lceil P(x)\right\rceil)$ is not false for any $x\in X_P$.
As shown above,  $T(\left\lceil P(x)\right\rceil)$ is  true for every $x\in X_P$. 

The above results imply that $P$ has properties (tp6) and (tp7) when $P$ is in $\mathcal P_1$. The proof in the case when $P$ is in $\mathcal P_2$ is similar when true is replaced by false and vice versa, and sentences with $\neg$ are replaced by those without $\neg$, and vice versa.

Assume next that $P$ is in $\mathcal P_3$.
 Then $P(x)$ is a true sentence of $L$ for some $x\in X_P$, say $x\in X^1_P$, and a false sentence of $L$ for $x\in X^2_P=X_P\setminus X^1_P$.
Since $U$ is nonempty, then \#[$\exists xT(\left\lceil P(x)\right\rceil)$] and \#[$\neg(\forall xT(\left\lceil P(x)\right\rceil))$] are in $D_4$ by \eqref{E20}, and hence in $G(U)$ by \eqref{E201} and \eqref{E21}. Thus $\exists xT(\left\lceil P(x)\right\rceil)$ and $\neg(\forall xT(\left\lceil P(x)\right\rceil))$ are by (II)  true sentences of $\mathcal L^0$.  

As shown above \#[$\exists xT(\left\lceil P(x)\right\rceil)$] and \#[$\neg(\forall xT(\left\lceil P(x)\right\rceil))$] are in  $G(U)=U$. Because $U$ is consistent, then \#[$\neg(\exists xT(\left\lceil P(x)\right\rceil))$] and 
\#[$\forall xT(\left\lceil P(x)\right\rceil)$] are not in $U=G(U)$. This implies by \eqref{E21} and (t0)  that \#[$\exists xT(\left\lceil P(x)\right\rceil)$] and \#[$\neg(\forall xT(\left\lceil P(x)\right\rceil))$] are  not in $F(U)$, and hence, by  (II), $\exists xT(\left\lceil P(x)\right\rceil)$ and $\neg(\forall xT(\left\lceil P(x)\right\rceil))$ are not false. 

Since every true sentence of $L$ is by Lemma \ref{L31} a true sentence of $\mathcal L^0$ and every false sentence of $L$ is a false sentence of $\mathcal L^0$, then, by the interpretation (II), 
 \#$P(x)$ is in $G(U)=U$ for every $x\in X^1_P$, and in $F(U)$ for every $x\in X^2_P$. In the latter case \#$\neg P(x)$ is in $G(U)$ by \eqref{E21}.
But then, by  \eqref{E20}, \#$T(\left\lceil P(x)\right\rceil)$ is in $D_1(U)$, and hence in $G(U)$, for every $x\in X^1_P$, and \#$\neg T(\left\lceil P(x)\right\rceil)$ is in $D_2(U)$, and hence in $G(U)$, for every $x\in X^2_P$.
Thus, by (II),  $T(\left\lceil P(x)\right\rceil)$ is true  for every $x\in X^1_P$, and $\neg T(\left\lceil P(x)\right\rceil)$ is true for every $x\in X^2_P$.

As noticed above, \#$P(x)$ is in $G(U)=U$  for every  $x\in X^1_P$, and \#[$\neg P(x)$] is in $G(U)=U$ for every $x\in X^2_P$. Since $U$ is consistent, then  \#[$\neg P(x)$]  is not in $U$ for any $x\in X^1_P$ and \#$P(x)$ is not in $U$ for any $x\in X^2_P$. This result and \eqref{E20} imply that \#[$\neg T(\left\lceil P(x)\right\rceil)]$ is not in $D_2(U)$, and hence not in $G(U)$ for any $x\in X^1_P$, and 
\#[$T(\left\lceil P(x)\right\rceil)]$ is not in $D_1(U)$, and hence not in $G(U)$ for any $x\in X^2_P$. Thus \#[$T(\left\lceil P(x)\right\rceil)]$ is not in $F(U)$ for any $x\in X^1_P$ and \#[$\neg T(\left\lceil P(x)\right\rceil)$] is not in $F(U)$ for any $x\in X^2_P$.
This means by  (II) that $T(\left\lceil P(x)\right\rceil)$ is not false for any $x\in X^1_P$, and $\neg T(\left\lceil P(x)\right\rceil)$ is not false for any $x\in X^2_P$.

Consequently, properties (tp6) and (tp7) hold also also  when $P$ is in $\mathcal P_3$.
\smallskip

The proof of properties (tt6) and (tt7) is similar to that given above for properties (tp6) and (tp7) in the case when $P$ is in $\mathcal P_3$.
 
The above results imply that the interpretation rules given in (ii) of Introduction are satisfied in the  interpretation (II) of $\mathcal L^0$.
\end{proof}

We shall derive some properties for the sets $G(U)$ and $F(U)$ defined by \eqref{E21}  when $U$ is a nonempty and consistent subset of $D$. 

\begin{lemma}\label{L01} Let $U$ be nonempty and consistent. Then $G(U)$ and  $F(U)$ are disjoint and consistent.  
\end{lemma}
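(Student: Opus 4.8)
The plan is to prove the two assertions separately. For consistency of $G(U)$, the goal is to show that there is no sentence $A$ of $\mathcal{L}$ with both $\#A$ and $\#[\neg A]$ in $G(U)$. Since $G(U)=\bigcup_n G_n(U)$ and the sets $G_n(U)$ are increasing, it suffices to show by induction on $n$ that each $G_n(U)$ is consistent; indeed if $\#A\in G(U)$ and $\#[\neg A]\in G(U)$ then both lie in some common $G_n(U)$. The base case $G_0(U)$ requires inspecting the pieces $W$, $D_1(U)$, $D_2(U)$, $D_1$, $D_2$, $D_3$, $D_4$ from \eqref{E20}–\eqref{E201}: $W$ is consistent because $L$ is fully interpreted (a sentence and its negation cannot both be true in $L$); $D_1(U)$ contains Gödel numbers of sentences $T(\left\lceil A\right\rceil)$ with $\#A\in U$, while a contradictory partner would be $\#[\neg T(\left\lceil A\right\rceil)]$, which by \eqref{E20} lies in $G_0(U)$ only via $D_2(U)$, requiring $\#[\neg A]\in U$ — impossible since $U$ is consistent; the finite sets $D_1,\dots,D_4$ are checked directly using that $\mathcal{P}_1,\mathcal{P}_2,\mathcal{P}_3$ are pairwise disjoint, so no predicate $P$ can simultaneously force, say, $\#[\forall xT(\left\lceil P(x)\right\rceil)]$ (from $D_2$, $P\in\mathcal P_1$) and $\#[\neg(\forall xT(\left\lceil P(x)\right\rceil))]$ (from $D_3$ or $D_4$, $P\in\mathcal P_2\cup\mathcal P_3$). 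One also checks across families: e.g.\ a sentence in $W$ has the form of a sentence of $L$, whereas sentences contributing to $D_1(U)$, $D_2(U)$, $D_1,\dots,D_4$ involve $T$, so (assuming the Gödel numbering and the syntactic separation of $L$-sentences from $T$-sentences) there is no overlap.

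For the inductive step, assume $G_n(U)$ is consistent and suppose for contradiction that some $A$ has $\#A,\#[\neg A]\in G_{n+1}(U)=G_n(U)\cup\bigcup_{k=0}^8 G_n^k(U)$. One analyzes the syntactic form of $A$. If $A$ is not of the shape $\neg(\neg B)$, $B\vee C$, $B\wedge C$, $B\to C$, $B\leftrightarrow C$, nor the negation of one of these, then $\#A$ can enter $G_{n+1}(U)$ only through $G_n(U)$, and likewise $\#[\neg A]$, contradicting the inductive hypothesis. Otherwise, $A$ has a principal connective, and one walks through the cases: e.g.\ if $A=B\vee C$, then $\#A\in G_n^1(U)$ gives $\#B\in G_n(U)$ or $\#C\in G_n(U)$, while $\#[\neg A]=\#[\neg(B\vee C)]\in G_n^5(U)$ gives $\#[\neg B],\#[\neg C]\in G_n(U)$; combining, we get (say) $\#B$ and $\#[\neg B]$ both in $G_n(U)$, contradicting consistency of $G_n(U)$. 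The remaining connectives are handled the same way, each time reducing a putative inconsistency at level $n+1$ to one at level $n$ via the matching clauses in \eqref{E203} (truth-clause $G_n^k$ paired with falsity-clause $G_n^{k'}$). This gives consistency of $G(U)$.

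Next, $F(U)$ is consistent: by \eqref{E21}, $\#A\in F(U)$ iff $\#[\neg A]\in G(U)$, so $\#A,\#[\neg A]\in F(U)$ would give $\#[\neg A],\#[\neg(\neg A)]\in G(U)$, i.e.\ (writing $B=\neg A$) both $\#B$ and $\#[\neg B]$ in $G(U)$, contradicting consistency of $G(U)$ just established.

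Finally, $G(U)$ and $F(U)$ are disjoint: suppose $\#A\in G(U)\cap F(U)$. Then $\#A\in G(U)$ and, by definition of $F(U)$, $\#[\neg A]\in G(U)$ — contradicting consistency of $G(U)$. So the intersection is empty, completing the proof. The main obstacle I anticipate is the base case: verifying that the seven explicitly listed pieces of $G_0(U)$ are jointly consistent requires care in ruling out cross-family collisions, and this leans on two facts that the paper treats as built into the syntax — that a sentence of $L$ is never syntactically identical to one built using $T$, and that the Gödel numbering is injective — together with the disjointness of $\mathcal P_1,\mathcal P_2,\mathcal P_3$ and, crucially, the consistency of $U$ itself (used for the $D_1(U)$ vs.\ $D_2(U)$ comparison). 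The inductive step, by contrast, is a routine connective-by-connective bookkeeping argument.
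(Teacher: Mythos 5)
Your overall plan (consistency of $G(U)$ by induction on $n$, then consistency of $F(U)$ and disjointness as easy corollaries) is reasonable, and your base case for $G_0(U)$ and your final two deductions are correct. But there is a genuine gap in the inductive step for consistency of $G(U)$: you only handle the case where both $\#A$ and $\#[\neg A]$ enter $G_{n+1}(U)$ through the matched pair of new clauses (e.g.\ $G_n^1(U)$ and $G_n^5(U)$), and the case where neither is compound. The mixed cases are missing: for instance $A=B\vee C$ with $\#[B\vee C]$ already in $G_n(U)$ (say because $B\vee C$ is a true sentence of $L$, so $\#[B\vee C]\in W$), while $\#[\neg(B\vee C)]$ is newly contributed by $G_n^5(U)$ because $\#[\neg B]$ and $\#[\neg C]$ lie in $G_n(U)$. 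Your matching-clause argument exhibits no inconsistency at level $n$ here; to rule this out you need an inversion property, e.g.\ that $\#[B\vee C]\in G_n(U)$ forces $\#B\in G_n(U)$ or $\#C\in G_n(U)$. That inversion is true, but only because a disjunction can enter $G_n(U)$ solely through $W$ (where full interpretedness of $L$ applies) or through some $G_m^1(U)$ with $m<n$, and establishing it requires the least-stage analysis of how a compound G\"odel number gets into the hierarchy — exactly the content of rules (t0)--(t5) in Proposition \ref{P1}. The same issue recurs for every connective, and symmetrically when $\#A$ is the newly added number and $\#[\neg A]$ is the old one; so the part you call ``routine connective-by-connective bookkeeping'' is precisely where the real work sits, and as written the induction does not close.

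The paper avoids these mixed cases by reversing your order: it proves disjointness first, showing $G_n(U)\cap F(U)=\emptyset$ by induction on $n$, where $F(U)$ is the fixed set from \eqref{E21}, and it imports from Proposition \ref{P1} the global characterizations of $F(U)$ (e.g.\ $\#[A\vee B]\in F(U)$ iff both $\#A$ and $\#B$ are in $F(U)$). Each new clause $G_n^k(U)$ is then compared against the stationary set $F(U)$ rather than against the moving target $G_{n+1}(U)$, so no inversion inside $G_n(U)$ is needed; consistency of $G(U)$ and of $F(U)$ then follow by the same short arguments you give at the end. To repair your proof, either first prove the level-$n$ inversion lemmas sketched above (one per generating clause, using the minimal-stage argument and Lemma \ref{L31} for the $W$ sub-case) and then run your induction, or simply adopt the paper's order and lean on Proposition \ref{P1} as it does.
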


\begin{proof} If  \#$A$ is in $W$, then $A$ is by \eqref{E201} a true sentence of $L$. Because $L$ is fully interpreted, then $\neg A$ is not true. Thus \#[$\neg A$]  is not in $W$, and hence not in $G(U)$. This implies by \eqref{E21} that \#$A$ is not in  $F(U)$, and hence not in $G_0(U)\cap F(U)$.

Let $x$ be a numeral. If \#$T(x)$ is in $G_0(U)$, it is in $D_1(U)$, so that, by \eqref{E20}, $x=\left\lceil A\right\rceil$, where \#$A$ is in $U$. Because $U$ is consistent, then \#[$\neg A$] is not in $U$. Thus, by \eqref{E20},  
\#[$\neg T(x)]$ is not in $D_2(U)$, and hence not in $G(U)$ when $x=\left\lceil A\right\rceil$. This implies by \eqref{E21} that  \#$T(x)$ is not in $F(U)$ when  $x=\left\lceil A\right\rceil$. Consequently, \#$T(x)$ is not in  $G_0(U)\cap F(U)$.

$U$ is a nonempty, and as a consistent set a proper subset of $D$. Thus \eqref{E201} implies that \#[$\neg\exists xT(x)$] and
\#[$\forall xT(x)$] are in not in $G_0(U)$. By the proof of rules (t6) and (t7) in Proposition \ref{P1} the sentence $\exists xT(x)$ is not false, and the sentence $\forall xT(x)$ is not true, and hence  $\neg\forall xT(x)$ is not false, so that \#[$\exists xT(x)$] and \#[$\neg\forall xT(x)$] are not in $F(U)$. 
Thus none of the  G\"odel numbers \#[$\exists xT(x)$], \#[$\forall xT(x)$], \#[$\neg\exists xT(x)$] and \#[$\neg\forall xT(x)$],
 are in $G_0(U)\cap F(U)$. 

If $P\in\mathcal P$, then by the proof of Proposition \ref{P1} the sentences $\exists xT(\left\lceil P(x)\right\rceil)$,  $\forall xT(\left\lceil P(x)\right\rceil)$, $\forall xT(\left\lceil T(x)\right\rceil)$ and $\neg(\exists xT(\left\lceil T(x)\right\rceil))$ are not both true and false, the sentences $T(\left\lceil P(x)\right\rceil)$  are not both true and false for any $x\in X_P$ and the sentences $T(\left\lceil T(x)\right\rceil)$  are not both true and false for any $x\in X_T$.
Hence the negations of these sentences are not by (t1) both true and false. Thus by (II), their G\"odel numbers are not  in $G(U)\cap F(U)$, so that they are not in $G_0(U)\cap F(U)$. 

The above results and the definition \eqref{E201} of $G_0(U)$ imply that $G_0(U)\cap F(U)=\emptyset$.

Make the induction hypothesis:  
\begin{itemize}
\item[(h0)] 
 $G_n(U)\cap F(U)=\emptyset$ for some $n\in \mathbb N_0$.
\end{itemize}

If \#[$\neg(\neg A)$] would be  in $G_n^0(U)\cap F(U)$, then \#$A$ would be in $G_n(U)$ and \#[$\neg(\neg A)$], or equivalently, by (t1), \#$A$ would be in $F(U)$, so that  \#$A$ would be in $G_n(U)\cap F(U)$. This is impossible by (h0), whence $G_n^0(U)\cap F(U)=\emptyset$.

If \#[$A\vee B$] is in $G_n^1(U)\cap F(U)$,  then \#$A$ or \#$B$ is in $G_n(U)$,  and both \#$A$ and \#$B$ are in $F(U)$ by (t2), so that \#$A$ or 
\#$B$ is in $G_n(U)\cap F(U)$. This contradicts with (h0), whence $G_n^1(U)\cap F(U)=\emptyset$.  

\#[$A\wedge B$] cannot be  in $G_n^2(U)\cap F(U)$, for otherwise both \#$A$ and \#$B$ are in $G_n(U)$, and at least one of \#$A$ and \#$B$ is in $F(U)$, so that \#$A$ or \#$B$ is in $G_n(U)\cap F(U)$, contradicting with (h0). Thus $G_n^2(U)\cap F(U)=\emptyset$.

If \#[$A\rightarrow B$] is in  $G_n^3(U)\cap F(U)$, then \#[$\neg A$] or \#$B$  is in $G_n(U)$ and both \#[$\neg A$] and \#$B$ are in $F(U)$.
But then \#[$\neg A$] or \#$B$  is in $G_n(U)\cap F(U)$. Thus  $G_n^3(U)\cap F(U)=\emptyset$ by (h0).

If \#[$A\leftrightarrow B$] is in $G_n^4(U)\cap F(U)$, then both \#$A$ and \#$B$ or both \#[$\neg A$] and \#[$\neg B$] are in $G_n(U)$,
and both \#[$A$] and \#[$\neg B$] or both \#[$\neg A$] and \#[$B$] are in $F(U)$. Then one of G\"odel numbers \#$A$, \#$B$, \#[$\neg A$] and \#[$\neg B$] is in $G_n(U)\cap F(U)$, contradicting with (h0). Consequently,  $G_n^4(U)\cap F(U)=\emptyset$.

If \#[$\neg(A\vee B)$] is in $G_n^5(U)\cap F(U)$,  then \#[$\neg A$] and \#[$\neg B$] are in $G_n(U)$, and \#[$A\vee B$] is in $G(U)$, i.e.,
\#$A$ or \#$B$ is in $G(U)$, or equivalently, \#[$\neg A$] or \#[$\neg B$] is in $F(U)$. Thus \#[$\neg A$] or \#[$\neg B$] is in $G_n(U)\cap F(U)$. This is impossible by (h0), whence $G_n^5(U)\cap F(U)=\emptyset$.  

If \#[$\neg(A\wedge B)$] is in $G_n^6(U)\cap F(U)$,  then \#[$\neg A$] or \#[$\neg B$] is in $G_n(U)$, and \#[$A\wedge B$] is in $G(U)$, or equivalently, \#$A$ and \#$B$ are in $G(U)$, i.e., \#[$\neg A$] and \#[$\neg B$] are in $F(U)$.  Consequently, \#[$\neg A$] or \#[$\neg B$] is in $G_n(U)\cap F(U)$, contradicting with (h0). Thus $G_n^6(U)\cap F(U)=\emptyset$.  

If \#[$\neg(A\rightarrow B)$] is in $G_n^7(U)\cap F(U)$,  then \#$A$ and \#[$\neg B$] are in $G_n(U)$, and \#[$A\rightarrow B$] is in $G(U)$, i.e.,
\#[$\neg A$] or \#$B$ is in $G(U)$, or equivalently, \#$A$ or \#[$\neg B$] is in $F(U)$. Thus \#$A$ or \#[$\neg B$] is in $G_n(U)\cap F(U)$. This contradicts with (h0), whence $G_n^7(U)\cap F(U)=\emptyset$.  

\#[$\neg(A\leftrightarrow B)$] cannot be  in $G_n^8(U)\cap F(U)$, for otherwise both \#[$A$] and \#[$\neg B$] or both \#[$\neg A$] and \#[$B$] are in $G_n(U)$, and \#[$A\leftrightarrow B$] is in $G(U)$, i.e.,
both \#[$\neg A$] and \#[$\neg B$]  or both \#$A$ and \#$B$ are in $F(U)$. Thus one of G\"odel numbers \#$A$, \#$B$, \#[$\neg A$] and \#[$\neg B$] is in $G_n(U)\cap F(U)$, contradicting with (h0). Thus $G_n^8(U)\cap F(U)=\emptyset$.

Because $G_{n+1}(U)=G_n(U)\cup \bigcup_{k=0}^8 G_n^k(U)$, the above results and the induction hypothesis (h0) imply that $G_{n+1}(U)\cap F(U)=\emptyset$.
Since (h0) is true when $n=0$, it is  by induction true for every $n\in\mathbb N_0$. 

If \#$A$ is in $G(U)$, it is by \eqref{E21} in $G_n(U)$ for some $n\in\mathbb N_0$. Because (h0) is true for every $n\in\mathbb N_0$, then \#$A$ is not in $F(U)$. Consequently, $G(U)\cap F(U)=\emptyset$.

If $G(U)$ is not consistent, then there is such a sentence $A$ of $\mathcal L$, that \#$A$ and \#[$\neg A$] are in $G(U)$. Because \#[$\neg A$] is in $G(U)$, then \#$A$ is also in $F(U)$ by \eqref{E21}, and hence in $G(U)\cap F(U)$. But this is impossible
since  $G(U)\cap F(U)=\emptyset$. Thus $G(U)$ is consistent. 

If $F(U)$ is not consistent, then  \#$A$ and \#[$\neg A$] are in $F(U)$ for some sentence $A$ of $\mathcal L$. Since \#[$\neg A$] is in $F(U)$, then \#$A$ is also in $G(U)$ by \eqref{E21}, and hence in $G(U)\cap F(U)$, a contradiction. 
Thus $F(U)$ is consistent.
\end{proof}

The main result of this subsection is a consequence of Proposition \ref{P1} and Lemma \ref{L01}. 

\begin{proposition}\label{P10} The language $\mathcal L^0$ defined by Definition \ref{D1} is fully interpreted in the interpretation (II).
\end{proposition}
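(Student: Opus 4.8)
The plan is to read off the two defining clauses of ``fully interpreted'' from (ii) of the Introduction and verify each one separately for $\mathcal L^0$ under the interpretation (II). The first clause --- that every sentence is interpreted either as true or as false (and as only one of the two) --- should follow almost immediately from Definition \ref{D1} together with Lemma \ref{L01}. The second clause --- that the classical rules for $\neg,\vee,\wedge,\rightarrow,\leftrightarrow,\forall,\exists$ are respected --- is precisely what Proposition \ref{P1} asserts.

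For the first clause I would argue as follows. By Definition \ref{D1}, $\mathcal L^0$ consists exactly of those sentences of $\mathcal L$ whose G\"odel number lies in $G(U)$ or in $F(U)$; hence, by (II), every sentence of $\mathcal L^0$ is interpreted as true (if its G\"odel number is in $G(U)$) or as false (if its G\"odel number is in $F(U)$), so no sentence is left uninterpreted. To see that no sentence receives both values it suffices to know $G(U)\cap F(U)=\emptyset$, and this is supplied by Lemma \ref{L01} once its two hypotheses are checked for the $U$ of Definition \ref{D1}: $U$ is consistent by its very choice, and $U$ is nonempty because $U=G(U)\supseteq G_0(U)\supseteq W$, where $W$ contains the G\"odel number of at least one true sentence of the fully interpreted language $L$ (for instance $A\vee\neg A$ for any sentence $A$ of $L$). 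Thus Lemma \ref{L01} applies and gives the disjointness of $G(U)$ and $F(U)$, so every sentence of $\mathcal L^0$ is interpreted as exactly one of true, false.

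For the second clause I would simply invoke Proposition \ref{P1}: it establishes that the interpretation rules listed in (ii) hold in the interpretation (II) of $\mathcal L^0$, covering the propositional rules (t1)--(t5) for $\neg,\vee,\wedge,\rightarrow,\leftrightarrow$ and the quantifier rules (t6), (t7), (tt6), (tt7), (tp6), (tp7) for the sentences $\forall xT(x)$, $\exists xT(x)$, $\forall xT(\left\lceil T(x)\right\rceil)$, $\exists xT(\left\lceil T(x)\right\rceil)$, and $\forall xT(\left\lceil P(x)\right\rceil)$, $\exists xT(\left\lceil P(x)\right\rceil)$ for predicates $P$ of $L$ --- which, together with Lemma \ref{L31} for the sentences coming from $L$ itself, exhausts the sentences of $\mathcal L^0$ built with logical symbols. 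Combining the two clauses yields the proposition. There is no genuine obstacle remaining: the substantive work has been carried out in Proposition \ref{P1} and Lemma \ref{L01}, and the only point deserving a moment's care is the verification that $U$ is nonempty, which is exactly what makes Lemma \ref{L01} applicable.
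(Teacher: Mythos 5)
Your proposal is correct and follows essentially the same route as the paper's own proof: Definition \ref{D1} plus the disjointness of $G(U)$ and $F(U)$ from Lemma \ref{L01} give that every sentence of $\mathcal L^0$ receives exactly one truth value under (II), and Proposition \ref{P1} supplies the classical interpretation rules. Your explicit check that $U$ is nonempty (via $U=G(U)\supseteq G_0(U)\supseteq W$) is a small but welcome addition, since the paper invokes Lemma \ref{L01} citing only consistency while the lemma also assumes nonemptiness.
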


\begin{proof} Let $A$ denote a sentence of $\mathcal L_0$. Then  \#$A$ is in $G(U)$ or in $F(U)$. Because $U$ is consistent, then 
 $G(U)$ and $F(U)$ are disjoint by Lemma \ref{L01}. Thus  \#$A$ is either in $G(U)$ or in $F(U)$, whence $A$ is either true or false in the interpretation (II). The interpretation rules given in (ii) of Introduction are satisfied by Proposition \ref{P1}.
\end{proof} 


\subsection{Semantical interpretation}\label{S32}

A semantical interpretation of a language is given by the following citation from \cite[p. 2]{Ho14}: 
\smallskip

"We say that a language is fully interpreted if all its sentences have meanings that make them either true or false." 
\smallskip

(see also \cite[p. 61]{[9]}). 
This is the case, for instance, when models are used in the interpretation. According to classical logic the interpretation rules (ii) presented in Introduction are assumed to hold also when a language is fully interpreted by meanings of its sentences.

If all sentences of the object language $L$ are equipped with meanings, then
meanings of the sentences of the language $\mathcal L^0$ are determined by meanings of the sentences of $L$, by meaning of the sentence $T(\left\lceil A\right\rceil)$, i.e., 'The sentence denoted by $A$ is true', and by standard meanings of logical symbols.
We shall show that if $L$ is fully interpreted by meanings of its sentences, then $\mathcal L^0$ is fully interpreted by meanings of its sentences, and  this interpretation is equivalent to the interpretation (II).  
In the proof of this result we use the results of the following Lemmas.

 \begin{lemma}\label{l70} Assume that a language $L$ is fully interpreted by  meanings of its sentences, and is without a truth predicate. Let $U$ be as in Definition \ref{D1}, and let $V$ be consistent and satisfy $W\subseteq V\subseteq U$.
If every sentence of $\mathcal L^0$ whose G\"odel number is in $V$ is true and not false by its meaning,
then this property is satisfied when $V$ is replaced by $G(V)$. 
\end{lemma}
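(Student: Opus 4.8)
The plan is to mimic the inductive bookkeeping of Lemma \ref{L01}, but tracking the property ``true and not false by meaning'' rather than ``not in $G_n(U)\cap F(U)$''. Set $V$ as in the hypothesis. Recall that $G(V)=\bigcup_{n} G_n(V)$, so it suffices to prove, by induction on $n$, the statement $(\ast_n)$: every sentence of $\mathcal L^0$ whose G\"odel number lies in $G_n(V)$ is true and not false by its meaning.

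First I would establish the base case $(\ast_0)$. If $V=\emptyset$ then $G_0(V)=W$ and the claim is immediate, since a sentence of $L$ whose G\"odel number is in $W$ is true by meaning in $L$ (hence in $\mathcal L^0$), and by full interpretation of $L$ it is not also false by meaning. If $\emptyset\subset V\subseteq U$, then $G_0(V)=W\cup D_1(V)\cup D_2(V)\cup D_1\cup D_2\cup D_3\cup D_4$; I would handle $W$ as just described, and for each remaining block argue directly from the meaning of $T$. For $D_1(V)$: a G\"odel number there is $\#T(\left\lceil A\right\rceil)$ with $\#A\in V$; by hypothesis the sentence denoted by $A$ is true by meaning, so $T(\left\lceil A\right\rceil)$, meaning ``the sentence denoted by $A$ is true'', is true by meaning, and since $V$ is consistent $\#[\neg A]\notin V$, so the hypothesis gives no obstruction — but more to the point, $A$ being true and not false by meaning forces $T(\left\lceil A\right\rceil)$ not false by meaning. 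For $D_2(V)$: similarly $\#[\neg A]\in V$ makes $\neg A$ true and not false by meaning, hence $A$ false and not true by meaning, hence $\neg T(\left\lceil A\right\rceil)$ true and not false by meaning. For $D_1,D_2,D_3,D_4$ one reads the defining sentences of $\mathcal L^0$ and uses that the predicates in $\mathcal P_1,\mathcal P_2,\mathcal P_3$ have exactly the truth behaviour recorded in \eqref{E50}, together with the semantic meaning of $T$ applied to the sentences $P(x)$ — exactly as in the proof of Proposition \ref{P1}, but now phrased via meanings.

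Next I would do the inductive step: assuming $(\ast_n)$, I would show $(\ast_{n+1})$. Since $G_{n+1}(V)=G_n(V)\cup\bigcup_{k=0}^8 G_n^k(V)$, and $(\ast_n)$ covers $G_n(V)$, it remains to treat each $G_n^k(V)$. For each $k$ the argument is the same shape: a G\"odel number in $G_n^k(V)$ is $\#C$ for a specific compound $C$ (e.g.\ $C=A\vee B$ for $k=1$), whose immediate constituents have G\"odel numbers forced by the definition \eqref{E203} into $G_n(V)$; by $(\ast_n)$ those constituents are true and not false by meaning; then the standard meaning of the relevant logical connective makes $C$ true by meaning, and — the point requiring the ``not false'' half of the induction hypothesis — also not false by meaning. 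For instance for $G_n^1(V)$, $\#A$ or $\#B\in G_n(V)$, so one of $A,B$ is true by meaning, so $A\vee B$ is true by meaning; it is not false by meaning because that would require both $A$ and $B$ false by meaning, contradicting that the true one is not false by $(\ast_n)$. The cases $k=0,2,3,4,5,6,7,8$ are handled by the corresponding clause of (ii) in the Introduction in the same manner (the biconditional cases $k=4,8$ needing both halves of $(\ast_n)$ for both constituents). Finally, passing to the union over $n$ gives the conclusion: every sentence of $\mathcal L^0$ whose G\"odel number is in $G(V)$ is true and not false by its meaning.

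The only real subtlety — and the step I would present most carefully — is that the hypothesis only asserts the property for sentences with G\"odel numbers \emph{in $V$}, whereas the blocks $D_1,\dots,D_4$ and $W$ in $G_0(V)$ involve sentences whose truth-by-meaning must be extracted not from the hypothesis on $V$ but from the semantics of $L$ and of $T$ directly; one must check these are genuinely available, which they are precisely because $L$ is fully interpreted by meanings and the $P(x)$ for $P\in\mathcal P_1\cup\mathcal P_2\cup\mathcal P_3$ are sentences of $L$ with the truth values recorded in \eqref{E50}. I expect no obstruction beyond this; everything else is the routine connective-by-connective verification already rehearsed in Proposition \ref{P1} and Lemma \ref{L01}, re-read through the semantic rather than the definitional interpretation. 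Consistency of $V$ is used only to know that the hypothesis is non-vacuous in the mutually exclusive way one wants, i.e.\ that $\#A\in V$ and $\#[\neg A]\in V$ never occur together, which is what lets the ``not false by meaning'' conclusions propagate without conflict.
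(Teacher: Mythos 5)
Your proposal is correct and follows essentially the same route as the paper's own proof: a block-by-block treatment of $G_0(V)$ (the $W$, $D_1(V)$, $D_2(V)$ and $D_1$--$D_4$ parts, handled via the hypothesis on $V$, the meaning of $T$, and the full interpretation of $L$ by meanings), followed by induction on $n$ through the connective clauses $G_n^k(V)$ and passage to the union $G(V)$. The only cosmetic difference is that your case $V=\emptyset$ never arises, since $W\subseteq V$ and $W$ is nonempty, and the paper extracts the truth of the $D_1$-block sentences from $W\subseteq V$ together with the already-established $D_1(V)$, $D_2(V)$ facts rather than from \eqref{E50}; these are inessential variations.
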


\begin{proof} Because $V\subseteq U=G(U)$, then every sentence whose G\"odel number is in $V$, is in $\mathcal L^0$.
 
Given a sentence of $\mathcal L^0$, its  G\"odel number is in $D_1(V)$  iff it is of the form $T(\left\lceil A\right\rceil)$, where $A$ denotes a sentence whose 
G\"odel number is in $V$.  $A$ is by an assumption  true and not false by its meaning.
Thus the sentence $T(\left\lceil A\right\rceil)$, i.e., the sentence 'The sentence denoted by $A$ is true', and hence the given sentence, is true and not false by its meaning. Replacing $A$ by $T(\left\lceil A\right\rceil)$, it follows that $T(\left\lceil T(\left\lceil A\right\rceil)\right\rceil)$ is true and not false by its meaning.

A given sentence of $\mathcal L^0$ has its  G\"odel number in $D_2(V)$ iff it is of the form $\neg T(\left\lceil A\right\rceil)$, where $A$ denotes a sentence of $\mathcal L$, and  the G\"odel number of the sentence $\neg A$ is in $V$.
$\neg A$ is by a hypothesis  true and not false by its meaning, and hence $A$ is false and not true by its meaning. Thus the sentence $T(\left\lceil A\right\rceil)$, i.e., the sentence 'The sentence denoted by $A$ is true', is false and not true by its meaning. 
Replacing $A$ by $T(\left\lceil A\right\rceil)$, we then obtain that $T(\left\lceil T(\left\lceil A\right\rceil)\right\rceil)$ is false and not true by its meaning.
Consequently, by the standard meaning of negation, the sentences $\neg T(\left\lceil A\right\rceil)$ and $\neg T(\left\lceil T(\left\lceil A\right\rceil)\right\rceil)$, and hence also the given sentence, are true and not false by their meanings. 

The domain $X_T$ of $T$, defined by \eqref{E32}, contains $\left\lceil A\right\rceil$ for every  sentence $A$ of $L$.  If $A$ is a true sentence of $L$, then it is  by assumption true and not false by its meaning.
 Since \#$A$ is in $W$, it is also in $V$, whence the sentences $T(\left\lceil A\right\rceil)$ 
and  $T(\left\lceil T(\left\lceil A\right\rceil)\right\rceil)$ are by a result proved above 
true and not false by their meanings. Thus $T(x)$ and $T(\left\lceil T(x)\right\rceil)$ are  for some $x\in X_T$ true and not false  by their meanings. 
These results and the standard meaning of the existential quantifier imply that $\exists x T(x)$ and $\exists xT(\left\lceil T(x)\right\rceil)$ are  true and not false by their meanings.

If $A$ is a false sentence of $L$, then the sentence $\neg A$ is true in the interpretation of $L$, and hence true and not false by its meaning. Since \#[$\neg A$] is in $W$, it is also in $V$, so that by a result proved above the sentence $\neg T(\left\lceil A\right\rceil)$ 
and  $\neg T(\left\lceil T(\left\lceil A\right\rceil)\right\rceil)$ are  true and not false by their meanings. Thus the sentences $\neg T(x)$ and $\neg T(\left\lceil T(x)\right\rceil)$ are  for some $x\in X_T$  true and not false by their meanings, so that $T(x)$ and 
$T(\left\lceil T(x)\right\rceil)$ are for some $x\in X_T$ false and not true by their meanings.  
The above results, the meaning of $T$, and  the standard meanings of the universal quantifier and negation imply that $\neg \forall x T(x)$ and $\neg \forall xT(\left\lceil T(x)\right\rceil)$ are  true and not false by their meanings.
Consequently, the sentences whose G\"odel numbers are in $D_1$ are true and not false by their meanings. 

Let $P$ be a predicate of $L$ with domain $X_P$.  $L$ is by assumption fully interpreted by meaning of its sentences. Thus 
$P(x)$ is for every $x\in X_P$, as a sentence of $L$, either true and not false, or false and not true by its meaning.
This property holds, because of the meaning of $T$, for sentences $T\left\lceil P(x)\right\rceil)$, $x\in X_P$.
By taking also the meanings of the existential and universal quantifiers and negation into account, it follows that    
the sentences whose G\"odel numbers are in  $D_2$, $D_3$ and $D_4$ are true and not false by their meanings.

The above results and the definition \eqref{E201} of $G_0(V)$ imply that every sentence of $\mathcal L^0$ whose G\"odel number is in $G_0(V)$ is true and not false by its meaning. Thus the following property holds when $n=0$.

\begin{itemize}
\item[(h2)] Every sentence of $\mathcal L^0$ whose G\"odel number is in $G_n(V)$ is true and not false by its meaning.
\end{itemize}

Make the induction hypothesis: (h2) holds for some $n\in\mathbb N_0$.

Given a sentence of $\mathcal L^0$ whose  G\"odel number is in $G_n^0(V)$, it is of the form $\neg(\neg A)$, where the G\"odel number of $A$ is in $G_n(V)$. 
By induction hypothesis $A$ is true and not false by its meaning. Thus, by standard meaning of negation, its double application implies that the sentence $\neg(\neg A)$,  and hence the given sentence, is true and not false by its meaning.

A given sentence of $\mathcal L^0$ has its G\"odel number in $G_n^1(V)$ iff it is of the form $A\vee B$, where the G\"odel number of $A$ or $B$ is in $G_n(V)$. 
By the induction hypothesis at least one of the sentences  $A$ and  $B$ is true and not false by its meaning.
 Thus, by the  the standard meaning of disjunction, the sentence
$A\vee B$, and hence  given sentence, is true and not false by its meaning. 

 Similarly it can be shown that if the induction hypothesis holds, then every sentence of $\mathcal L^0$ whose G\"odel number is in  $G_n^k(V)$, where  $2 \le k\le 8$, is true and not false by its meaning. 
 
The above results imply that under the induction hypothesis 
every sentence of $\mathcal L^0$ whose G\"odel number is in $G_n^k(V)$, where  $0 \le k\le 8$, is true and not false by its meaning.

It then follows from the definition \eqref{E204} of $G_{n+1}(V)$ that if (h2) is valid for some $n\in\mathbb N_0$, 
then every sentence of $\mathcal L^0$ whose G\"odel number is in $G_{n+1}(V)$ is true and not false by its meaning.

The first part of this proof shows that (h2) is valid when $n=0$. Thus by induction,
it is valid for all $n\in\mathbb N_0$.
This result and  \eqref{E21} imply that  every sentence of $\mathcal L^0$ whose G\"odel number is in $G(V)$ is true and not false by its meaning.
\end{proof}

\begin{lemma}\label{L9} Assume that a language $L$ is fully interpreted by meanings of its sentences, and has not a truth predicate. Then   the language $\mathcal L^0$ given in Definition \ref{D1} has the following properties.

(a) If a sentence of $\mathcal L^0$ is true in the interpretation (II), it is true and not false by its meaning.

(b) If a sentence of $\mathcal L^0$ is false in the interpretation (II), it is false and not true by its meaning.
\end{lemma}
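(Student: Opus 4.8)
The plan is to prove Lemma \ref{L9} by reducing it to Lemma \ref{l70}, using the consistency machinery of Lemma \ref{L01} and the auxiliary rule (t0) from the proof of Proposition \ref{P1}. First I would take $V=U$ in Lemma \ref{l70}: since $U$ is consistent and $W\subseteq U\subseteq U$, the hypothesis of that lemma needs checking, but note that the \emph{conclusion} of Lemma \ref{l70} (if the sentences whose G\"odel numbers lie in $V$ are true and not false by meaning, then so are those in $G(V)$) together with $U=G(U)$ gives a fixed-point flavour. The cleanest route, though, is: observe that $W\subseteq U$ and every sentence whose G\"odel number is in $W$ is a true sentence of $L$, hence (since $L$ is fully interpreted by meanings) true and not false by its meaning; so the hypothesis of Lemma \ref{l70} holds for $V=W$. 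Applying Lemma \ref{l70} gives that every sentence of $\mathcal L^0$ whose G\"odel number is in $G(W)$ is true and not false by its meaning. But $W=G_0(W)\subseteq G(W)$, and more importantly one must climb from $G(W)$ up to $G(U)=U$; this is where a short transfinite/iterative argument is needed, iterating $G$ starting from $W$ and invoking Lemma \ref{l70} at successor steps and monotonicity of $G$ at limits, until one reaches the least fixed point $U$. Alternatively, and more simply, one directly verifies the hypothesis of Lemma \ref{l70} for $V=U$ itself by the same iteration, then concludes from $U=G(U)$ that every sentence whose G\"odel number is in $U=G(U)$ is true and not false by its meaning — which is exactly statement (a) for sentences whose G\"odel number is in $G(U)$.

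Once (a) is established for all sentences of $\mathcal L^0$ whose G\"odel numbers lie in $G(U)$, statement (b) follows quickly. A sentence $A$ of $\mathcal L^0$ is false in interpretation (II) iff $\#A$ is in $F(U)$, iff (by \eqref{E21}) $\#[\neg A]$ is in $G(U)$. By part (a) applied to $\neg A$, the sentence $\neg A$ is true and not false by its meaning; then by the standard meaning of negation, $A$ is false and not true by its meaning. This gives (b).

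The main obstacle I anticipate is justifying rigorously that the hypothesis "every sentence of $\mathcal L^0$ whose G\"odel number is in $V$ is true and not false by its meaning" actually holds for a set $V$ large enough to have $G(V)=U$ — i.e. bridging the gap between the base case $V=W$ and the least fixed point $U$. Lemma \ref{l70} is a single-step statement ($V\rightsquigarrow G(V)$), whereas $U$ is obtained in the Appendix by a transfinite recursion; so the proof of Lemma \ref{L9} must mirror that transfinite construction: define $V_0=W$, $V_{\alpha+1}=G(V_\alpha)$, and $V_\lambda=\bigcup_{\alpha<\lambda}V_\alpha$ at limits, check each $V_\alpha$ is consistent and sandwiched between $W$ and $U$ (using that $U$ is the \emph{smallest} consistent fixed point and monotonicity of $G$), and propagate the "true and not false by meaning" property along this recursion using Lemma \ref{l70} at successors and the fact that a union of such sets still has the property at limits. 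Reaching the stabilisation point $\alpha^*$ with $V_{\alpha^*}=U$ then yields the claim. The verification that each $V_\alpha\subseteq U$ and each $V_\alpha$ is consistent is routine given Lemma \ref{L01} and the minimality of $U$, but it is the step that must be written carefully rather than waved through.

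A secondary point to watch: Lemma \ref{l70} as stated requires $V$ consistent with $W\subseteq V\subseteq U$; along the recursion $V_\alpha\subseteq U=G(U)$ and $G$ is monotone, so $V_{\alpha+1}=G(V_\alpha)\subseteq G(U)=U$, and consistency of $V_{\alpha+1}$ follows because it is a subset of the consistent set $U$ (Lemma \ref{L01} gives consistency of $G(U)$, hence of all its subsets). So the side conditions are automatically maintained, and the real content is just the meaning-preservation propagated by Lemma \ref{l70}. With that in place, (a) and then (b) fall out as described.
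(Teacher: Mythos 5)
Your proposal is correct and takes essentially the paper's approach: a transfinite induction propagating ``true and not false by its meaning'' from $W$ up to $U$ via Lemma~\ref{l70}, followed by deducing (b) from (a) applied to $\neg A$ and the standard meaning of negation. The only organisational difference is that the paper runs the induction along the sequence $(U_\lambda)_{\lambda<\gamma}$ already constructed in Theorem~\ref{T2} (whose last member is proved there to be $U$), which dispenses with the stabilisation-at-$U$ argument you sketch (monotonicity, consistency of subsets of $U$, and minimality of $U$) --- your version of that step is workable but is exactly the bookkeeping the Appendix already supplies.
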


\begin{proof} By Theorem \ref{T2} the smallest consistent subset $U$ of $D$ which satisfies $U=G(U)$ is the last member of the transfinite sequence $(U_\lambda)_{\lambda<\gamma}$ (indexed by Von Neumann ordinals) constructed in that Theorem.
We prove by transfinite induction that the following result holds for all $\lambda < \gamma$.
\begin{itemize}
\item[(H)] Every sentence  of  $\mathcal L^0$ whose G\"odel number is in $U_\lambda$ is true and not false by its meaning.
\end{itemize}    
Make the induction hypothesis: There exists a $\mu$ which satisfies $0<\mu< \gamma$ such that (H) holds for all $\lambda < \mu$.

Because $U_\lambda$ is consistent and $W\subseteq U_\lambda\subseteq U$ for every $\lambda < \mu$, it follows from the induction hypothesis and Lemma 3.3 that (H) holds  when $U_\lambda$ is replaced by any of the sets $G(U_\lambda)$, $\lambda <\mu$. Thus (H) holds when $U_\lambda$ is replaced by the union of those sets. But this union is $U_\mu$ by Theorem \ref{T2} (C), whence (H) holds when $\lambda =\mu$.

When $\mu =1$, then $\lambda<\mu$ iff $\lambda=0$.
$U_0=W$, i.e., the set of G\"odel numbers of true sentences of $L$. By assumption these sentences are true and not false by their meanings. 
Moreover, these sentences are also sentences of $\mathcal L^0$, since it contains sentences of $L$. 
This proves that the induction hypothesis is satisfied  when $\mu=1$.   
 
The above proof implies by transfinite induction, that properties assumed in (H) for $U_\lambda$ are valid whenever 
 $\lambda <\gamma$. In particular, they are valid for the last member of   $(U_\lambda)_{\lambda<\gamma}$,
which is by Theorem \ref{T2} the smallest consistent subset $U$  of $D$ for which $U=G(U)$. 
Thus every sentence of $\mathcal L^0$, which is true in the interpretation (II), has its G\"odel number in $U$, and hence by is the above proof true and not false by its meaning. This  proves (a).

To prove (b), let $A$ denote a sentence which is false in the interpretation (II). Negation rule implies that $\neg A$ is true in the interpretation (II). Thus, by (a), $\neg A$ is true and not false by its meaning, so that by the standard meaning of negation, $A$ is false and not true by its meaning.
This proves (b).
\end{proof}

Now we are ready to prove the main result of this subsection. 
        
\begin{proposition}\label{P7} Assume that a language  $L$ is fully interpreted by meanings of its sentences, and is without a truth predicate. Then the language $\mathcal L^0$ defined in Definition \ref{D1} is fully interpreted by meanings of its sentences, and this interpretation is equivalent to the interpretation (II). 
\end{proposition}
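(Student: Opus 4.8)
The plan is to derive this directly from Proposition~\ref{P10} and Lemma~\ref{L9}, since those two results already contain all the substantive work. First I would recall that by Proposition~\ref{P10} the language $\mathcal L^0$ is fully interpreted under (II), so every sentence of $\mathcal L^0$ is either true or false in the interpretation (II). Given an arbitrary sentence $A$ of $\mathcal L^0$, I would split into the two cases. If $A$ is true in (II), then Lemma~\ref{L9}(a) yields that $A$ is true and not false by its meaning; if $A$ is false in (II), then Lemma~\ref{L9}(b) yields that $A$ is false and not true by its meaning. In either case $A$ has a meaning that makes it either true or false, so --- together with the standing assumption, stated at the beginning of Subsection~\ref{S32}, that the standard meanings of the logical symbols obey the classical rules (ii) --- the language $\mathcal L^0$ is fully interpreted by meanings of its sentences.

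For the equivalence of the two interpretations I would argue both implications. The forward directions are immediate from Lemma~\ref{L9}: if $A$ is true in (II) it is true by its meaning, and if $A$ is false in (II) it is false by its meaning. For the converse, suppose $A$ is true by its meaning; by Proposition~\ref{P10} the sentence $A$ is true or false in (II), and were it false in (II), Lemma~\ref{L9}(b) would force $A$ to be false and not true by its meaning, contradicting the assumption, so $A$ is true in (II). The symmetric argument, using Lemma~\ref{L9}(a), shows that if $A$ is false by its meaning then $A$ is false in (II). Hence a sentence of $\mathcal L^0$ is true (resp.\ false) by its meaning if and only if it is true (resp.\ false) in the interpretation (II), which is exactly the asserted equivalence.

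There is no real obstacle remaining at this stage: the difficulty has already been absorbed into Lemma~\ref{L9} (and, behind it, Lemma~\ref{l70} together with the transfinite construction of Theorem~\ref{T2}) and into Proposition~\ref{P10}. The only point requiring mild care is the bookkeeping between the two senses of ``fully interpreted'': the semantical sense requires both bivalence and the classical rules (ii), and the latter are not established here but taken as a property of the standard meanings of the connectives and quantifiers, so nothing further needs checking for them; bivalence, and the mutual exclusivity of ``true by meaning'' and ``false by meaning'' that makes the equivalence meaningful, are precisely what the case split above supplies via the ``and not false'' / ``and not true'' clauses of Lemma~\ref{L9}.
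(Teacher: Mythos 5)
Your proposal is correct and follows essentially the same route as the paper: invoke Proposition~\ref{P10} for bivalence under (II), then apply Lemma~\ref{L9}(a)/(b) in the two cases to get truth/falsity by meaning, concluding full interpretation and equivalence. The only difference is that you spell out the converse directions of the equivalence explicitly (via the exclusivity clauses of Lemma~\ref{L9}), which the paper leaves implicit in its phrase ``by the above proof''; this is a harmless, slightly more careful rendering of the same argument.
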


\begin{proof}
Let $A$ denote a sentence of $\mathcal L^0$. $A$ is by Proposition \ref{P10} either true or false 
in the interpretation (II). If $A$ is true in the interpretation (II), it is by Lemma \ref{L9} (a) true and not false by its meaning.
If $A$ is false in the interpretation (II), it is by Lemma \ref{L9} (b) false and not true by its meaning. Consequently, $A$ is either true  or false by its meaning.  Thus every sentence of $\mathcal L^0$ is either true or false by its meaning, 
or equivalently, $\mathcal L^0$ is fully interpreted by meanings of its sentences,
 and this interpretation is by the above proof equivalent to the interpretation (II). 
\end{proof}


\section{Theory DSTT and its properties}\label{S4}

The next theorem provides for the language $\mathcal L^0$ defined in Definition \ref{D1} a theory of truth. Because the interpretation of $\mathcal L^0$ can be definitional or semantical, we call that theory definitional/semantical theory of truth, shortly DSTT. 

\begin{theorem}\label{T1}  Assume that an object language $L$ is fully interpreted, and is without a truth predicate. Then the language $\mathcal L^0$ defined by Definition \ref{D1} is fully interpreted by (II), and also by meanings of its sequences if $L$ is so interpreted. Moreover, $A\leftrightarrow T(\left\lceil A\right\rceil)$ is true and $A\leftrightarrow \neg T(\left\lceil A\right\rceil)$ is false for every sentence $A$ of $\mathcal L^0$, and $T$ is a truth predicate for $\mathcal L^0$.
\end{theorem}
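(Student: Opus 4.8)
The first two assertions need no fresh argument: that $\mathcal L^0$ is fully interpreted under (II) is precisely Proposition \ref{P10}, and that $\mathcal L^0$ is also fully interpreted by meanings of its sentences whenever $L$ is, with the two interpretations agreeing, is precisely Proposition \ref{P7}. So the plan is to spend the work on the two T-biconditionals and on the claim that $T$ is a truth predicate, arguing throughout with the explicit description of $G(U)$ given by \eqref{E20}--\eqref{E21}, and using the fact established inside the proof of Proposition \ref{P1} that $U$ is nonempty and, being consistent, a proper subset of $D$, so that $D_1(U)$ and $D_2(U)$ really are summands of $G_0(U)$ in \eqref{E201}.

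Fix a sentence $A$ of $\mathcal L^0$; by Proposition \ref{P10} it is either true or false under (II). The first step I would take is to pin down the truth value of $T(\left\lceil A\right\rceil)$. If $A$ is true, then $\#A\in G(U)=U$, so with $x=\left\lceil A\right\rceil$ the definitions \eqref{E20} and \eqref{E201} give $\#T(x)\in D_1(U)\subseteq G_0(U)\subseteq G(U)$; hence $T(\left\lceil A\right\rceil)$ is a sentence of $\mathcal L^0$ and is true under (II). If $A$ is false, then $\#[\neg A]\in G(U)=U$ by \eqref{E21}, so with $x=\left\lceil A\right\rceil$ we get $\#[\neg T(x)]\in D_2(U)\subseteq G(U)$, whence $\#T(\left\lceil A\right\rceil)\in F(U)$ and $T(\left\lceil A\right\rceil)$ is a (well-defined) false sentence of $\mathcal L^0$. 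Either way $A$ and $T(\left\lceil A\right\rceil)$ carry the same truth value. To conclude that $A\leftrightarrow T(\left\lceil A\right\rceil)$ is true I would not quote rule (t5) abstractly — that presupposes the biconditional is already a sentence of $\mathcal L^0$ — but instead read off membership from \eqref{E203}: choosing $n$ with $\#A$ and $\#T(\left\lceil A\right\rceil)$ both in $G_n(U)$ (the ``true'' case), or $\#[\neg A]$ and $\#[\neg T(\left\lceil A\right\rceil)]$ both in $G_n(U)$ (the ``false'' case), the defining clauses of $G_n^4(U)$ place $\#[A\leftrightarrow T(\left\lceil A\right\rceil)]$ in $G_{n+1}(U)\subseteq G(U)$. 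Thus $A\leftrightarrow T(\left\lceil A\right\rceil)$ is a sentence of $\mathcal L^0$ and is true.

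For the falsity of $A\leftrightarrow\neg T(\left\lceil A\right\rceil)$ I would show $\#[\neg(A\leftrightarrow\neg T(\left\lceil A\right\rceil))]\in G(U)$ and then apply \eqref{E21}. In the ``true'' case, $\#A\in G(U)$ and, since $\#T(\left\lceil A\right\rceil)\in G(U)$, rule (t0) (equivalently the definition of $G_n^0(U)$) gives $\#[\neg(\neg T(\left\lceil A\right\rceil))]\in G(U)$; taking $B=\neg T(\left\lceil A\right\rceil)$, the first clause of $G_n^8(U)$ in \eqref{E203} then yields $\#[\neg(A\leftrightarrow\neg T(\left\lceil A\right\rceil))]\in G(U)$. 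In the ``false'' case, $\#[\neg A]\in G(U)$ and $\#[\neg T(\left\lceil A\right\rceil)]\in G(U)$, and the second clause of $G_n^8(U)$ gives the same conclusion. Hence $\#[A\leftrightarrow\neg T(\left\lceil A\right\rceil)]\in F(U)$ and, by (II), $A\leftrightarrow\neg T(\left\lceil A\right\rceil)$ is false. Finally, that $T$ is a truth predicate for $\mathcal L^0$ is obtained by combining Proposition \ref{P1} — which already says $T$ is a predicate of $\mathcal L^0$ with domain $X_T$, the set of numerals of G\"odel numbers of all sentences of $\mathcal L^0$ — with the T-biconditional just proved: for each sentence $A$ of $\mathcal L^0$ we have $\left\lceil A\right\rceil\in X_T$ by \eqref{E32}, and since $A\leftrightarrow T(\left\lceil A\right\rceil)$ is true, the biconditionality rule (t5) forces $T(\left\lceil A\right\rceil)$ to be true exactly when $A$ is.

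All of this is essentially bookkeeping with \eqref{E20}--\eqref{E21}; the one point that genuinely needs care — and hence what I regard as the main obstacle — is that one must establish simultaneously that $T(\left\lceil A\right\rceil)$, $A\leftrightarrow T(\left\lceil A\right\rceil)$ and $A\leftrightarrow\neg T(\left\lceil A\right\rceil)$ are themselves sentences of $\mathcal L^0$ before it is even legitimate to speak of their truth values. That is precisely why I would argue by explicit membership through the stages $G_0(U),\dots,G_n^4(U),G_n^8(U)$ and absorb the double negation $\neg(\neg T(\left\lceil A\right\rceil))$ using (t0), rather than invoking the interpretation rules (t0)--(t5) as black boxes applied to sentences whose membership in $\mathcal L^0$ has not yet been checked.
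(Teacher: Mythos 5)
Your proposal is correct and follows essentially the same route as the paper: Propositions \ref{P10} and \ref{P7} for the two full-interpretation claims, the identity $U=G(U)$ together with $D_1(U)$, $D_2(U)$ to show $A$ and $T(\left\lceil A\right\rceil)$ (and $\neg A$ and $\neg T(\left\lceil A\right\rceil)$) always share truth values, and Proposition \ref{P1} with \eqref{E32} for the claim that $T$ is a truth predicate. Your only deviation — unwinding $G_n^0(U)$, $G_n^4(U)$ and $G_n^8(U)$ by hand instead of citing (t1) and (t5) — merely re-proves the relevant directions of those rules; as proved in Proposition \ref{P1} they already place the Gödel number of the compound in $G(U)$ or $F(U)$, hence already deliver its membership in $\mathcal L^0$, so the sentencehood worry you flag does not require the extra bookkeeping, though it does no harm.
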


\begin{proof} $\mathcal L^0$ is by Proposition \ref{P10} fully interpreted in the interpretation (II). If $L$ is fully interpreted by meanings of its sequences, so is $\mathcal L^0$ by Proposition \ref{P7}, and this interpretation is equivalent to the interpretation (II).

Let  $A$ denote a sentence of $\mathcal L^0$.
The interpretation (II), rule (t1), the definitions of $D_1(U)$, $D_2(U)$ and $G(U)$, and the assumption  $U=G(U)$ in Definition \ref{D1}
imply that
\newline
-- $A$ is  true iff \#$A$ is in $G(U)$ iff \#$A$ is in $U$  iff 
\#$T(\left\lceil A\right\rceil)$ is in $G(U)$ and \#[$\neg T(\left\lceil A\right\rceil)$] is in $F(U)$ iff $T(\left\lceil A\right\rceil)$ is true and  $\neg T(\left\lceil A\right\rceil)$ is false;
\newline
-- $A$ is false iff \#$A$ is in $F(U)$ iff \#[$\neg A$] is in $G(U)$ iff \#[$\neg A$] is in $U$  iff \#[$\neg T(\left\lceil A\right\rceil)$] is in $G(U)$  and \#$T(\left\lceil A\right\rceil)$ is in $F(U)$
 iff $\neg T(\left\lceil A\right\rceil)$ is true and $T(\left\lceil A\right\rceil)$ is false.  
\newline
The above results and rule (t5) imply that $A\leftrightarrow T(\left\lceil A\right\rceil)$ is true and $A\leftrightarrow \neg T(\left\lceil A\right\rceil)$ is false for every sentence $A$ of $\mathcal L^0$.

 It follows from Proposition \ref{P1} that $T$ is a predicate of $\mathcal L^0$. Moreover, \eqref{E32} implies that the domain $X_T$ of $T$ is the set of numerals of G\"odel numbers of
all sentences of $\mathcal L^0$. Thus $X_T$ satisfies the following condition presented in \cite[p. 7]{Fe}: "In the case of the truth
predicate $T$, the domain ... is taken to consist of the sentences that
are meaningful and determinate, i.e. have a definite truth value, true or false." (In \cite{Fe}  numerals of G\"odel numbers of
sentences are replaced by sentences itself.)
Consequently, $T$ is a truth predicate for $\mathcal L^0$.
\end{proof}


The next two Lemmas deal with compositionality properties of $T$.

\begin{lemma}\label{L41} Let $\mathcal L^0$ and its interpretation be as in Theorem \ref{T1}. Then the following biconditionals are true  for all sentences $A$ and $B$ of $\mathcal L^0$.

(a1)  $T(\left\lceil \neg A\right\rceil)\leftrightarrow \neg T(\left\lceil A\right\rceil)$.

(a2) $T(\left\lceil A\vee B\right\rceil) \leftrightarrow  T(\left\lceil A\right\rceil)\vee T(\left\lceil B\right\rceil)$. 

(a3) $T(\left\lceil A\wedge B\right\rceil) \leftrightarrow  T(\left\lceil A\right\rceil)\wedge T(\left\lceil B\right\rceil)$. 

(a4) $T(\left\lceil A\rightarrow B\right\rceil) \leftrightarrow  (T(\left\lceil A\right\rceil)\rightarrow T(\left\lceil B\right\rceil))$. 

(a5) $T(\left\lceil A\leftrightarrow B\right\rceil) \leftrightarrow  (T(\left\lceil A\right\rceil)\leftrightarrow T(\left\lceil B\right\rceil))$. 
\end{lemma}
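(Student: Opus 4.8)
The plan is to prove each of the five biconditionals by unwinding both sides through the interpretation (II) and the key equivalence from Theorem \ref{T1}, namely that for every sentence $C$ of $\mathcal L^0$ the sentence $C$ is true iff $T(\left\lceil C\right\rceil)$ is true (and $C$ is false iff $\neg T(\left\lceil C\right\rceil)$ is true, equivalently iff $T(\left\lceil C\right\rceil)$ is false). Since $\mathcal L^0$ is fully interpreted (Proposition \ref{P10}), it suffices by rule (t5) to check that the two sides of each biconditional always have the same truth value. The engine of all five cases is the same: apply the Theorem \ref{T1} equivalence to collapse each occurrence of $T(\left\lceil \cdot\right\rceil)$ to the truth value of its argument, then invoke the corresponding classical interpretation rule (t1)--(t5) for the connective in question.

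First I would record the preliminary observation that whenever $A$ and $B$ are sentences of $\mathcal L^0$, so are $\neg A$, $A\vee B$, $A\wedge B$, $A\rightarrow B$ and $A\leftrightarrow B$ — this is needed so that $T(\left\lceil\neg A\right\rceil)$, $T(\left\lceil A\vee B\right\rceil)$, etc., are genuinely sentences of $\mathcal L^0$ with $\left\lceil\neg A\right\rceil,\dots\in X_T$, so that the Theorem \ref{T1} equivalence applies to them. (This follows from the construction of $\mathcal L$ together with $\mathcal L^0$ containing every sentence of $\mathcal L$ whose Gödel number lies in $G(U)$ or $F(U)$; alternatively one notes these compound sentences, if their components are interpreted, are themselves assigned a value by the rules of Proposition \ref{P1}.) Then for (a1): $T(\left\lceil\neg A\right\rceil)$ is true iff $\neg A$ is true iff (by (t1)) $A$ is false iff $T(\left\lceil A\right\rceil)$ is false iff (by (t1) again, applied to the sentence $T(\left\lceil A\right\rceil)$ of $\mathcal L^0$) $\neg T(\left\lceil A\right\rceil)$ is true; so the two sides share a truth value and (a1) is true by (t5).

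For (a2)--(a5) the pattern is identical, just with more components. For (a2): $T(\left\lceil A\vee B\right\rceil)$ is true iff $A\vee B$ is true iff (by (t2)) $A$ is true or $B$ is true iff $T(\left\lceil A\right\rceil)$ is true or $T(\left\lceil B\right\rceil)$ is true iff (by (t2) applied in $\mathcal L^0$) $T(\left\lceil A\right\rceil)\vee T(\left\lceil B\right\rceil)$ is true. For (a3) use (t3) in place of (t2); for (a4) use (t4); for (a5) use (t5), noting one must run the chain through ``both true or both false'' and here it is cleanest to argue by cases on the truth values of $A$ and $B$, using the Theorem \ref{T1} equivalence to transfer ``$A$ true'' to ``$T(\left\lceil A\right\rceil)$ true'' and ``$A$ false'' to ``$T(\left\lceil A\right\rceil)$ false'' (the latter via the false-clause of the equivalence, $A$ false iff $T(\left\lceil A\right\rceil)$ false). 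In each case both sides of the outer biconditional come out with the same value, so each (a$i$) is true by (t5).

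I do not expect a genuine obstacle here; the only point requiring a little care is the one flagged above — ensuring that the compound sentences appearing inside the corners, and the sentences $T(\left\lceil A\right\rceil)\vee T(\left\lceil B\right\rceil)$ etc. built from them, really are sentences of $\mathcal L^0$ so that the full-interpretation and the equivalence of Theorem \ref{T1} may legitimately be applied to them. Once that bookkeeping is in place, each biconditional is a two-line chain of ``iff''s alternating between the Theorem \ref{T1} equivalence and the relevant classical rule (t$k$), concluded by (t5). I would write out (a1) and (a2) in full and remark that (a3), (a4), (a5) follow by the same argument with (t3), (t4), (t5) respectively in place of (t2), the case (a5) being handled by splitting into the four combinations of truth values of $A$ and $B$.
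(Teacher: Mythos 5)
Your proposal is correct and follows essentially the same route as the paper's own proof: each (a$i$) is established by an iff-chain that alternates the Theorem \ref{T1} equivalence ($C$ true iff $T(\left\lceil C\right\rceil)$ true, $C$ false iff $T(\left\lceil C\right\rceil)$ false) with the classical rules (t1)--(t5), concluding via (t5). The only cosmetic differences are that the paper writes out both the ``true'' and ``false'' chains explicitly (and treats (a5) by such a chain rather than by cases), while you collapse the false case using full interpretation and add the (harmless, and indeed worthwhile) bookkeeping remark that the compound sentences involved are sentences of $\mathcal L^0$.
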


\begin{proof}
Let $A$ be a sentence of $\mathcal L^0$. Then

$T(\left\lceil \neg A\right\rceil)$ is true iff $\neg A$ is true iff $A$ is false iff $T(\left\lceil A\right\rceil)$ is false iff
 $\neg T(\left\lceil A\right\rceil)$ is true.

$T(\left\lceil \neg A\right\rceil)$ is false iff $\neg A$ is false iff $A$ is true iff $T(\left\lceil A\right\rceil)$ is true iff
 $\neg T(\left\lceil A\right\rceil)$ is false.

Thus (a1) is true.

Let $A$ and $B$ be sentences of $\mathcal L^0$. Then

$T(\left\lceil A\vee B\right\rceil)$ is true iff $A\vee B$ is true iff $A$ or $B$ is true iff $T(\left\lceil A\right\rceil)$ or $T(\left\lceil B\right\rceil)$ is true iff
 $T(\left\lceil A\right\rceil)\vee T(\left\lceil B\right\rceil)$ is true.

$T(\left\lceil A\vee B\right\rceil)$ is false iff $A\vee B$ is false iff $A$ and $B$ are false iff $T(\left\lceil A\right\rceil)$ and $T(\left\lceil B\right\rceil)$ are false iff
 $T(\left\lceil A\right\rceil)\vee T(\left\lceil B\right\rceil)$ is false.

Consequently, (a2) is true. The proof that (a3) and (a4) are true is similar.

Let $A$ and $B$ be sentences of $\mathcal L^0$. Then

$T(\left\lceil A\leftrightarrow  B\right\rceil)$ is true iff $A\leftrightarrow  B$ is true iff $A$ and $B$ are both true or both false iff $T(\left\lceil A\right\rceil)$ and $T(\left\lceil B\right\rceil)$  are both true or both  false  iff
 $T(\left\lceil A\right\rceil)\leftrightarrow  T(\left\lceil B\right\rceil)$ is true.

$T(\left\lceil A\leftrightarrow  B\right\rceil)$ is false iff $A\leftrightarrow  B$ is false iff $A$ is true and $B$ is false or $A$ is false and $B$ is true iff $T(\left\lceil A\right\rceil)$ is true and $T(\left\lceil B\right\rceil)$ is false or $T(\left\lceil A\right\rceil)$ is false and $T(\left\lceil B\right\rceil)$ is true iff
 $T(\left\lceil A\right\rceil)\leftrightarrow T(\left\lceil B\right\rceil)$ is false.
Consequently, (a5) is true.
\end{proof}

\begin{lemma}\label{L42} Let $\mathcal L^0$ and its interpretation be as in Theorem \ref{T1}.
If $P$ is a predicate of $L$ with domain $X_P$, then the following biconditionals are true.

(a6) $T(\left\lceil \forall xP(x)\right\rceil) \leftrightarrow  \forall xT(\left\lceil P(x)\right\rceil)$. 

(a7) $T(\left\lceil \exists xP(x)\right\rceil) \leftrightarrow  \exists xT(\left\lceil P(x)\right\rceil)$. 

(a8) $T(\left\lceil \forall xT(x)\right\rceil) \leftrightarrow  \forall xT(\left\lceil T(x)\right\rceil)$. 

(a9) $T(\left\lceil \exists xT(x)\right\rceil) \leftrightarrow  \exists xT(\left\lceil T(x)\right\rceil)$. 
\end{lemma}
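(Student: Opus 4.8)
The plan is to treat each of the four biconditionals (a6)--(a9) in exactly the same style as Lemma~\ref{L41}: establish a chain of ``true iff'' equivalences and a parallel chain of ``false iff'' equivalences, and then conclude by the biconditionality rule (t5) from Proposition~\ref{P1} that the stated biconditional is true. The key observation that drives the ``true'' direction is that the sentence $\forall xP(x)$ (resp.\ $\exists xP(x)$, $\forall xT(x)$, $\exists xT(x)$) is a sentence of $\mathcal L^0$ whenever the corresponding quantified $T$-sentence on the right-hand side is, so that all of $T(\left\lceil\forall xP(x)\right\rceil)$, $T(\left\lceil P(x)\right\rceil)$, $\forall xT(\left\lceil P(x)\right\rceil)$ are legitimate sentences of $\mathcal L^0$ and have a definite truth value by Proposition~\ref{P10}.

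For (a7), I would argue: $T(\left\lceil\exists xP(x)\right\rceil)$ is true iff (by Theorem~\ref{T1}, the $T$-biconditional) $\exists xP(x)$ is true iff (by the quantifier rule for $L$, equivalently interpretation rule (ii) transferred to $\mathcal L^0$ via Lemma~\ref{L31}) $P(x)$ is true for some $x\in X_P$ iff (again by Theorem~\ref{T1}) $T(\left\lceil P(x)\right\rceil)$ is true for some $x\in X_P$ iff (by rule (tp6) established inside the proof of Proposition~\ref{P1}) $\exists xT(\left\lceil P(x)\right\rceil)$ is true. Dually, $T(\left\lceil\exists xP(x)\right\rceil)$ is false iff $\exists xP(x)$ is false iff $P(x)$ is false for every $x\in X_P$ iff $T(\left\lceil P(x)\right\rceil)$ is false for every $x\in X_P$ iff (by (tp6) again) $\exists xT(\left\lceil P(x)\right\rceil)$ is false. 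Then (t5) yields (a7). The argument for (a6) is identical with $\exists$ replaced by $\forall$ and (tp6) replaced by (tp7), and the arguments for (a8) and (a9) are identical again with $P$ replaced by the predicate $T$ on $X_T$ and (tp7),(tp6) replaced by (tt7),(tt6) respectively --- all of which were proved in Proposition~\ref{P1}. In each case the crucial middle link --- passing between ``$\forall xT(\left\lceil P(x)\right\rceil)$ is true'' and ``$T(\left\lceil P(x)\right\rceil)$ is true for every $x$'' --- is precisely what (tp6)--(tp7) and (tt6)--(tt7) assert, so no new combinatorics on the sets $G_n(U)$ is needed.

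The main point requiring care, rather than a genuine obstacle, is the bookkeeping that ensures every quantified sentence appearing in these chains is actually a sentence of $\mathcal L^0$ (so that (II) assigns it a truth value and Theorem~\ref{T1}'s $T$-biconditional applies to it): for a predicate $P$ of $L$ this is built into the construction of $\mathcal L_0$ in Section~\ref{S2}, which explicitly adds $\forall xT(\left\lceil P(x)\right\rceil)$ and $\exists xT(\left\lceil P(x)\right\rceil)$ for every predicate $P$ of $L$, together with $\forall xT(\left\lceil T(x)\right\rceil)$, $\exists xT(\left\lceil T(x)\right\rceil)$, $\forall xT(x)$ and $\exists xT(x)$; and for $T(\left\lceil\exists xP(x)\right\rceil)$ one notes that $\exists xP(x)$ is a sentence of $L$, hence of $\mathcal L^0$, hence $\left\lceil\exists xP(x)\right\rceil\in X_T$. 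Once this is observed, (a6)--(a9) follow by the routine ``true iff'' / ``false iff'' chains exactly as in Lemma~\ref{L41}, appealing to Proposition~\ref{P1}, Lemma~\ref{L31} and Theorem~\ref{T1}.
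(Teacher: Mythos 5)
Your proposal is correct and takes essentially the same route as the paper: a chain of ``true iff'' equivalences through the $T$-biconditionals of Theorem \ref{T1}, the quantifier rules (transferred via Lemma \ref{L31} for sentences of $L$), and (tp6)--(tp7), (tt6)--(tt7) from Proposition \ref{P1}, finished off by rule (t5). The only cosmetic difference is the falsity direction: you run a direct ``false iff'' chain through the false clauses of those rules, while the paper deduces it indirectly from the truth equivalence together with the fact that $\mathcal L^0$ is fully interpreted; both are valid.
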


\begin{proof} 
 $T(\left\lceil \forall xP(x)\right\rceil)$ is true iff $\forall xP(x)$ is true iff $P(x)$
is true for every $x\in X_P$ iff $T(\left\lceil P(x)\right\rceil)$ is true for every $x\in X_P$,  iff (by (tp7)) $\forall xT(\left\lceil P(x)\right\rceil)$ is true.
Consequently,

(a61) $T(\left\lceil \forall xP(x)\right\rceil)$ is true  iff $\forall xT(\left\lceil P(x)\right\rceil)$ is true. 

Assume that $T(\left\lceil \forall xP(x)\right\rceil)$ is false. If $\forall xT(\left\lceil P(x)\right\rceil)$ would be true, then 
 $T(\left\lceil \forall xP(x)\right\rceil)$ would be true by (a61). But then  $T(\left\lceil \forall xP(x)\right\rceil)$ would be both false and true, which is impossible because $\mathcal L^0$ is fully interpreted. Thus $\forall xT(\left\lceil P(x)\right\rceil)$ is false.
Similarly it can be shown that if $\forall xT(\left\lceil P(x)\right\rceil)$ is false, then $T(\left\lceil \forall xP(x)\right\rceil)$ is false.
The above results imply that

(a62) $T(\left\lceil \forall xP(x)\right\rceil)$ is false  iff $\forall xT(\left\lceil P(x)\right\rceil)$ is false. 

Results (a61), (a62) and (t5) imply that (a6) is true.

$T(\left\lceil \exists xP(x)\right\rceil)$ is true iff $\exists xP(x)$ is true iff $P(x)$
is true for some $x\in X_P$ iff $T(\left\lceil P(x)\right\rceil)$ is true for some $x\in X_P$, iff (by (tp6)) $\exists xT(\left\lceil P(x)\right\rceil)$ is true.
Consequently,

(a71) $T(\left\lceil \exists xP(x)\right\rceil)$ is true  iff $\exists xT(\left\lceil P(x)\right\rceil)$ is true. 

Result (a71) and the fact that $\mathcal L^0$ is fully interpreted imply (cf. the proof of (a62)) that

(a72) $T(\left\lceil \exists xP(x)\right\rceil)$ is false  iff $\exists xT(\left\lceil P(x)\right\rceil)$ is false.

As a consequence of (a71), (a72) and (t5) we obtain that (a7) is true. 

Truths of (a8) and (a9) are proved  similarly as truths of (a6) and (a7) when properties (tp6) and (tp7) are replaced by (tt6) and (tt7).
\end{proof}






Hannes Leitgeb formulated in his paper \cite{Lei07}:  'What Theories of Truth Should be Like (but Cannot be)' the following norms for theories of truth:
\begin{itemize}
\item[(n1)] Truth should be expressed by a predicate (and a theory of syntax should be available).

\item[(n2)]  If a theory of truth is added to mathematical or empirical theories, it
should be possible to prove the latter true.

\item[(n3)]  The truth predicate should not be subject to any type restrictions.

 \item[(n4)] $T$-biconditionals should be derivable unrestrictedly.

 \item[(n5)]  Truth should be compositional.

\item[(n6)]  The theory should allow for standard interpretations.

 \item[(n7)]  The outer logic and the inner logic should coincide.

\item[(n8)]  The outer logic should be classical.
\end{itemize}

The next Theorem shows that theory DSTT satisfies these norms, and also two additional norms. 
The author considers validity the extra norm (n9) as crucial to every theory of truth.   

\begin{theorem}\label{T31} The theory of truth DSTT formulated in Theorem \ref{T1} 
satisfies the norms (n1)--(n8) and 
the following norms.
\smallskip

(n9) The theory of truth should be free from paradoxes.
\smallskip

(n10) Truth should be
explained for the language in which this very theory is expressed.
\end{theorem}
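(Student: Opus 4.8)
The plan is to verify the ten norms one at a time, drawing on the results already established in Sections~\ref{S2}--\ref{S4} and on the Appendix. Norms (n1)--(n8) fall out of what has been proved: for (n1), a theory of syntax for $\mathcal L^0$ was described in (I) of Section~\ref{S3}, and Proposition~\ref{P1} together with Theorem~\ref{T1} shows that $T$ is a truth predicate whose domain $X_T$ is given by \eqref{E32}; for (n2), since the object language $L$ is contained in $\mathcal L^0$ and by Lemma~\ref{L31} the interpretations of $L$ and $\mathcal L^0$ agree on $L$, every true sentence $A$ of a mathematical or empirical theory formulated in $L$ is true in $\mathcal L^0$, and by Theorem~\ref{T1} the biconditional $A\leftrightarrow T(\left\lceil A\right\rceil)$ is true, so $T(\left\lceil A\right\rceil)$ holds, i.e.\ $A$ is provably true; for (n3), the domain $X_T$ in \eqref{E32} ranges over numerals of G\"odel numbers of \emph{all} sentences of $\mathcal L^0$, including those containing $T$ itself (the predicates $T(\left\lceil T(\cdot)\right\rceil)$ and the sentences $\forall xT(\left\lceil T(x)\right\rceil)$, $\exists xT(\left\lceil T(x)\right\rceil)$ were explicitly built into $\mathcal L_0$), so there is no type stratification; for (n4), the unrestricted $T$-biconditionals are exactly the content of Theorem~\ref{T1}; for (n5), compositionality is Lemmas~\ref{L41} and \ref{L42}; for (n6), the construction starts from any fully interpreted $L$ (e.g.\ arithmetic with its standard model, or set theory with the minimal model of \cite{[4]}), and by Proposition~\ref{P7} the definitional interpretation (II) coincides with the semantical one when $L$ is semantically interpreted, so standard interpretations are allowed; for (n7) and (n8), by Proposition~\ref{P10} $\mathcal L^0$ is fully interpreted and the rules (ii) of the Introduction---the classical rules---are validated by Proposition~\ref{P1} both at the object level (the ``inner logic'' governing $T(\left\lceil A\right\rceil)$ via Lemmas~\ref{L41}--\ref{L42}) and at the metalevel (the ``outer logic'' governing the connectives and quantifiers of $\mathcal L^0$), so the two logics coincide and both are classical.

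For (n9), the argument is that $\mathcal L^0$ is fully interpreted (Proposition~\ref{P10}), meaning every sentence receives exactly one of the two truth values and these values obey the classical rules; hence no sentence of $\mathcal L^0$ is both true and false, and in particular the Liar-type sentence cannot arise in $\mathcal L^0$ as a paradox. More precisely, the would-be Liar is a sentence $A$ of $\mathcal L$ with $A\leftrightarrow\neg T(\left\lceil A\right\rceil)$; by Theorem~\ref{T1} the sentence $A\leftrightarrow\neg T(\left\lceil A\right\rceil)$ is \emph{false} for every sentence $A$ of $\mathcal L^0$, so such an $A$ simply is not a sentence of $\mathcal L^0$---its G\"odel number lies in neither $G(U)$ nor $F(U)$. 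Thus the diagonal sentence is excluded from the fully interpreted fragment rather than forced to be contradictory, and no paradox is derivable. For (n10), I would observe that the truth predicate $T$ has been shown (Proposition~\ref{P1}, Theorem~\ref{T1}) to be a truth predicate for $\mathcal L^0$ itself, and $\mathcal L^0$ contains the formula $T(x)$, the sentences $\forall xT(x)$, $\exists xT(x)$, and---crucially---the iterated sentences $\forall xT(\left\lceil T(x)\right\rceil)$, $\exists xT(\left\lceil T(x)\right\rceil)$, so the metatheoretic claim ``the sentence denoted by $A$ is true'' is itself expressible and correctly evaluated inside $\mathcal L^0$; hence truth is explained for the very language in which the theory DSTT is expressed.

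The main obstacle I anticipate is (n9): making precise in what sense ``free from paradoxes'' holds, since the usual worry is that a language with its own self-applicable truth predicate collapses. The resolution to spell out carefully is that the construction does not attempt to assign a truth value to every sentence of $\mathcal L$, but only to those whose G\"odel numbers land in $G(U)\cup F(U)$ where $U$ is the \emph{smallest consistent} fixed point $U=G(U)$ (whose existence is Theorem~\ref{T2} of the Appendix); the consistency of $U$, hence the disjointness of $G(U)$ and $F(U)$ (Lemma~\ref{L01}), is exactly what guarantees no sentence of $\mathcal L^0$ is both true and false. I would therefore phrase the (n9) step as: the paradoxical sentences are precisely those whose G\"odel numbers lie outside $G(U)\cup F(U)$, so they are not sentences of $\mathcal L^0$ at all, and within $\mathcal L^0$ the bivalent, classically behaved interpretation (II) leaves no room for a contradiction---this is the substantive point, and everything else is a matter of citing the earlier results.
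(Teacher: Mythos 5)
Your proposal is correct and follows essentially the same route as the paper: each norm is verified by citing the already-established results ((I), Lemma \ref{L31}, Propositions \ref{P1}, \ref{P10}, \ref{P7}, Theorem \ref{T1}, Lemmas \ref{L41}--\ref{L42}), with (n9) resting on bivalence of the interpretation (II) and (n10) on the fact that $T$ is a truth predicate for $\mathcal L^0$ itself. Your extra elaboration of (n9) via the consistency of the fixed point $U$ (Theorem \ref{T2}, Lemma \ref{L01}) and the falsity of $A\leftrightarrow\neg T(\left\lceil A\right\rceil)$ only spells out what the paper states more tersely and in its Remarks section, so there is no substantive difference.
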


\begin{proof} (n1): $T$ is by Theorem \ref{T1} a truth predicate for the language $\mathcal L^0$ of theory DSTT. (A theory of syntax is available, as stated in (I) after Definition \ref{D1}).
\smallskip

(n2): By Lemma \ref{L31} DSTT proves theories of the object language $L$ true. 
\smallskip

(n3): $T$ is by Theorem \ref{T1} a truth predicate for the language $\mathcal L^0$ of theory DSTT, and is not subject to any restrictions in 
that language.
\smallskip

(n4):  By Theorem \ref{T1}  $T$-biconditionals $A\leftrightarrow T(\left\lceil A\right\rceil)$ are derivable unrestrictedly in $\mathcal L^0$.
\smallskip

(n5):  Lemmas \ref{L41} and \ref{L42} imply that truth in DSTT is compositional.
\smallskip

(n6): DSTT allows for standard interpretations. For instance, the interpretation of the language $\mathcal L^0$ of theory DSTT by meanings of its sentences is possible if the object language is so interpreted. Examples of such interpretations are given in Introduction. 
\smallskip

(n7) and (n8): The interpretation rules given in (ii) of Introduction, assumed
for $L$, and proved for $\mathcal L^0$ in Proposition \ref{P1}, are those of classical logic. Consequently, 
both  the outer logic and the inner logic are classical in DSTT.  
\smallskip

(n9): Every sentence of $\mathcal L^0$ is either true or false in both interpretations. Thus DSTT is free from paradoxes. 
\smallskip

(n10): In (II) truth is explained (an interpretation is explained in English) for the language  $\mathcal L^0$ where theory DSTT is expressed. 
In the semantical case the interpretation is explained by meanings of the sentences of  $\mathcal L^0$.
\end{proof}

\section{Remarks}\label{S7}


Compared with \cite{Hei14,Hei15,Hei16}, the language for the presented theory of truth contains new sentences, the domain of $T$ is so chosen that it meets the requirements presented in \cite{Fe}, and the collection of possible object languages $L$ is larger. Neither compositionality results for $T$ nor the interpretation of $\mathcal L^0$ by meanings of its sentences, and the equivalence if this interpretation to the interpretation (II) are presented in those papers. Also some proofs are simplified and specified. 
\smallskip

As in \cite{Hei16}, the sentences $\exists x [\neg T(x)]$ and $\forall x [\neg T(x)]$ could be added to $\mathcal L^0$ so that the biconditionals
$\forall x [\neg T(x)]\leftrightarrow [\neg(\exists x T(x))]$ and $\exists x [\neg T(x)]\leftrightarrow [\neg(\forall x T(x))]$ would be true. Thus also $\neg T$  would be a predicate of $\mathcal L^0$. 
\smallskip

If $U$ is a consistent subset of $D$ which satisfies $U=G(U)$, but not the smallest, and if $L$ is fully interpreted, then $T$ is a truth predicate for the language $\mathcal L^0$, defined as in Definition \ref{D1}, and interpreted by (II), using that $U$. The so obtained theory of truth satisfies norms (n0)--(n10). It is questionable whether there exists a proof that for such $U$ the language $\mathcal L^0$ is fully interpreted by meanings of its sentences when  $L$ is so interpreted. 
\smallskip

The set $U$ in theory DSTT is the smallest consistent subset of $D$ which satisfies $U=G(U)$.
Thus the sentences of $\mathcal L^0$ are grounded in the sense defined by Kripke in \cite[p. 18]{[15]}. 
\smallskip

In Tarski's theory of truth (cf. \cite{[20]}) the truth predicate is in every step for the  language preceding that step. The sentences of that language do not contain the truth predicate in question. Thus the norms (n3) and (n4) are not satisfied.  
\smallskip

Leitgeb gives in \cite[p. 9]{Lei07} the following justification to his opinion that there cannot be a theory of truth which satisfies all the norms presented in \cite{Lei07}: "Consider a first-order theory which conforms to these norms, such that truth is to be explained for the language in which this very theory is expressed. From the theory of syntax the existence of a so-called Liar sentence is derivable." 
This means that
 the existence of a sentence $A$ for which $A\leftrightarrow \neg T(\left\lceil A\right\rceil)$ "follows from the syntactic axioms", and hence is true. But also $A\leftrightarrow T(\left\lceil A\right\rceil)$
is true by (n4). Thus $A$ is both true and false by the biconditionality rule (t5) of classical logic, so that 
such a theory of truth is contradictory, whereas DSTT is not.  While the theory of syntax for the object language  $L$ can be that of a first-order theory, the theory of syntax for the language $\mathcal L^0$ of theory DSTT is not.
\smallskip

In \cite[Conclusions]{Fe} Feferman urges  "the pursuit of axiomatizations of semantical or definitional approaches that have not yet been
thus treated, and the close examination of them in the light of the given criteria." By 'the given criteria' Feferman means Leitgeb's norms (n1)--(n8). The close examination of theory DSTT in the light of those criteria is carried out above. However, neither axioms for DSTT nor an axiomatic theory of truth conforming to norms (n1)--(n10) can be constructed. Reasons for this are found from (\cite{Fi15}).


\section{Appendix}\label{S8}
Before the proof of Theorem \ref{T2} we shall first prove auxiliary results, using the concepts adopted in previous sections.

\begin{lemma}\label{L203} Assume that $U$ and $V$ are consistent  subsets of  $D$, and that $V\subseteq U$.
 Then $G(V)\subseteq G(U)$ and $F(V)\subseteq F(U)$.  
\end{lemma}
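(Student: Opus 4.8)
The statement is purely about the set-operators $G(\cdot)$ and $F(\cdot)$, so I expect no appeal to interpretations or meanings here, only to the explicit recursive definitions \eqref{E20}--\eqref{E21}. The natural approach is: first show $G_0(V)\subseteq G_0(U)$; then prove by induction on $n$ that $G_n(V)\subseteq G_n(U)$ for all $n\in\mathbb N_0$, which yields $G(V)\subseteq G(U)$ after taking unions; and finally deduce $F(V)\subseteq F(U)$ from the definition $F(W')=\{\#A:\#[\neg A]\in G(W')\}$ together with the inclusion just proved for $G$.

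\textbf{Base case.} For $G_0$, inspect the definition \eqref{E201}. Since $V\subseteq U$ and both are nonempty and consistent (hence proper subsets of $D$), both $G_0(V)$ and $G_0(U)$ are given by the second clause $W\cup D_1(\cdot)\cup D_2(\cdot)\cup D_1\cup D_2\cup D_3\cup D_4$. The blocks $W,D_1,D_2,D_3,D_4$ do not depend on the argument, so it suffices to check $D_1(V)\subseteq D_1(U)$ and $D_2(V)\subseteq D_2(U)$. But from \eqref{E20}, $\#T(x)\in D_1(V)$ means $x=\left\lceil A\right\rceil$ with $\#A\in V\subseteq U$, so $\#T(x)\in D_1(U)$; likewise for $D_2$. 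Hence $G_0(V)\subseteq G_0(U)$. (A minor wrinkle: if $V=\emptyset$ the first clause of \eqref{E201} applies to $V$ and gives $G_0(V)=W\subseteq G_0(U)$; since the lemma is typically invoked with $V$ nonempty this is a side remark, but it costs nothing to note it.)

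\textbf{Inductive step.} Assume $G_n(V)\subseteq G_n(U)$. Each of the nine auxiliary sets $G_n^k(\cdot)$ in \eqref{E203} is defined by a condition asserting that certain Gödel numbers (of $A$, $B$, $\neg A$, or $\neg B$) lie in $G_n(\cdot)$; the defining conditions are \emph{monotone} in $G_n(\cdot)$ (they are positive combinations of membership statements, using only ``is in'' — never ``is not in''). Therefore $G_n^k(V)\subseteq G_n^k(U)$ for each $k=0,\dots,8$. Combining with the inductive hypothesis and the definition \eqref{E204}, $G_{n+1}(V)=G_n(V)\cup\bigcup_{k=0}^8 G_n^k(V)\subseteq G_n(U)\cup\bigcup_{k=0}^8 G_n^k(U)=G_{n+1}(U)$. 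By induction the inclusion holds for all $n$, and taking the union over $n$ via \eqref{E21} gives $G(V)\subseteq G(U)$. Finally, if $\#A\in F(V)$ then $\#[\neg A]\in G(V)\subseteq G(U)$, so $\#A\in F(U)$; hence $F(V)\subseteq F(U)$.

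\textbf{Main obstacle.} There is no deep obstacle: the only thing to be careful about is confirming that every clause in \eqref{E20}, \eqref{E201}, and \eqref{E203} is genuinely monotone in the argument set — in particular that $G_n^k$ is defined with positive occurrences of $G_n(\cdot)$ only, which a direct inspection of \eqref{E203} confirms — and that consistency of $V$ and $U$ is used only to know we are in the relevant branch of \eqref{E201} (and is otherwise not needed for the inclusion). So the proof is a routine monotonicity induction.
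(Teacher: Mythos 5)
Your proof is correct and follows essentially the same route as the paper's: establish $G_0(V)\subseteq G_0(U)$ by noting that only $D_1(\cdot)$ and $D_2(\cdot)$ depend on the argument set (with the empty-set clause of \eqref{E201} handled as you do in the parenthetical), then run the monotonicity induction on $n$ through \eqref{E203}--\eqref{E204}, take unions via \eqref{E21}, and deduce $F(V)\subseteq F(U)$ directly from the definition of $F$. No gaps; your explicit remark that the clauses of \eqref{E203} involve only positive occurrences of membership is just a cleaner articulation of what the paper's inductive step uses implicitly.
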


\begin{proof} As consistent sets both $V$ and $U$ are proper subsets of $D$.
 
Let $A$ be a sentence of $L$. Definition of $G(U)$ implies that \#$A$ is in $G(U)$ and also in $G(V)$ iff \#$A$ is in $W$.
  
If \#$T(x)$ is in $D_1(V)$,  then   $x=\left\lceil A\right\rceil$, where \#$A$ is in $V$. Because $V\subseteq U$, then \#$A$ is also in $U$, whence \#$T(x)$ is in $D_1(U)$.

If \#[$\neg T(x)]$ is in $D_2(V)$,  then  $x$ is $\left\lceil A\right\rceil$, where \#[$\neg A$] is in $V$. Because $V\subseteq U$, then \#[$\neg A$] is also in $U$, whence \#[$\neg T(x)$] is in $D_2(U)$.

If \#[$\exists xT(x)$] is  in $G_0(V)$, then $V$ is nonempty. Because $V\subseteq U$, then also $U$ is nonempty, whence \#[$\exists xT(x)]$ is in $G_0(U)$. Consequently,   
\#[$\exists xT(x)$] is in $G_0(U)$ whenever it is in $G_0(V)$. The similar reasoning shows that \#[$\neg\forall x T(x))$] is in $G_0(U)$ whenever it is in $G_0(V)$. The sets $D_1$, $D_2$, $D_3$ and $D_4$ are contained in $U$ if they are contained in $V$.

The above results imply that $G_0(V)\subseteq G_0(U)$. 
Make an induction hypothesis:
\begin{enumerate}
\item[(h1)] $G_n(V)\subseteq G_n(U)$.
\end{enumerate}
The definitions of the sets $G_n^k(U)$, $k=0,\dots,8$, given in \eqref{E203}, together with (h1), imply that
$G_n^k(V)\subseteq G_n^k(U)$ for each $k=0,\dots,8$. Thus
$$
G_{n+1}(V)=G_n(V)\cup \bigcup_{k=0}^8 G_n^k(V)\subseteq G_n(U)\cup \bigcup_{k=0}^8 G_n^k(U)=G_{n+1}(U).
$$
Because (h1) is true when $n=0$, then it is true for every $n\in\mathbb N_0$.

If \#$A$ is in $G(V)$, it is by \eqref{E21} in $G_n(V)$ for some $n$. Thus   \#$A$ is in $G_n(U)$ by (h1), and hence in $G(U)$.
Consequently, $G(V)\subseteq G(U)$. 

If \#$A$ is in $F(V)$, it follows from \eqref{E21} that \#[$\neg A$] is in $G(V)$. Because $G(V)\subseteq G(U)$, then \#[$\neg A$] is in $G(U)$.
This implies  by \eqref{E21} that  \#$A$ is in $F(U)$. Thus $F(V)\subseteq F(U)$.
\end{proof}

Denote by $\mathcal C$ the family of consistent subsets of $D$.
In the formulation and the proof of Theorem \ref{T2} below  transfinite sequences indexed by von Neumann ordinals are used. A transfinite sequence $(U_\lambda)_{\lambda<\alpha}$ of $\mathcal C$ is said to be increasing if
$U_\mu\subseteq U_\nu$ whenever $\mu<\nu<\alpha$, and strictly increasing if
$U_\mu\subset U_\nu$ whenever $\mu<\nu<\alpha$.

\begin{lemma}\label{L204} Assume that $(U_\lambda)_{\lambda<\alpha}$ a strictly increasing sequence of $\mathcal C$.
Then

(a) $(G(U_\lambda))_{\lambda<\alpha}$  is an increasing sequence of $\mathcal C$.

(b) The set $U_\alpha = \underset{\lambda<\alpha}{\bigcup}G(U_\lambda)$ is consistent.
\end{lemma}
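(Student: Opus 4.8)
The plan is to prove the two parts in order, using Lemma \ref{L203} as the principal tool and exploiting the hypothesis that $(U_\lambda)_{\lambda<\alpha}$ is strictly increasing in $\mathcal C$.

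For part (a), I would first observe that each $U_\lambda$ is consistent by hypothesis, hence a proper subset of $D$, so that $G(U_\lambda)$ is defined by \eqref{E21}. I would then show $G(U_\lambda)$ is itself consistent: the argument is essentially that of Lemma \ref{L01}, whose hypothesis (nonempty and consistent) is met once we note that for $\lambda>0$ the set $U_\lambda \supseteq U_0 \supseteq W$ is nonempty (and for $\lambda=0$, if $U_0=\emptyset$ then $G(U_0)=W$, which is consistent since $L$ is fully interpreted); so each $G(U_\lambda)\in\mathcal C$. Monotonicity is then immediate: if $\mu<\nu<\alpha$ then $U_\mu\subseteq U_\nu$ (both are consistent), so Lemma \ref{L203} gives $G(U_\mu)\subseteq G(U_\nu)$. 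Hence $(G(U_\lambda))_{\lambda<\alpha}$ is an increasing sequence of $\mathcal C$.

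For part (b), the only real content is consistency of the union $U_\alpha=\bigcup_{\lambda<\alpha}G(U_\lambda)$. Suppose for contradiction that for some sentence $A$ of $\mathcal L$ both \#$A$ and \#[$\neg A$] lie in $U_\alpha$. Then there are indices $\mu,\nu<\alpha$ with \#$A\in G(U_\mu)$ and \#[$\neg A$]$\in G(U_\nu)$. Put $\rho=\max\{\mu,\nu\}<\alpha$; since the sequence $(G(U_\lambda))$ is increasing by part (a), both \#$A$ and \#[$\neg A$] lie in $G(U_\rho)$, contradicting the consistency of $G(U_\rho)$ established in part (a). Therefore $U_\alpha$ is consistent.

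I do not expect a serious obstacle here: the whole statement is a routine "directed union of consistent sets is consistent" argument, once the monotonicity supplied by Lemma \ref{L203} and the consistency of each $G(U_\lambda)$ (via Lemma \ref{L01}) are in hand. The one point requiring a little care is checking the nonemptiness hypothesis of Lemma \ref{L01} for each $G(U_\lambda)$ — handled by the case split on whether $U_\lambda$ is empty, which can only happen at $\lambda=0$ — and making sure the comparison $U_\mu\subseteq U_\nu$ needed to invoke Lemma \ref{L203} follows from strict increase of the original sequence together with the fact that all the $U_\lambda$ are consistent, hence proper subsets of $D$.
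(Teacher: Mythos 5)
Your argument is correct and follows essentially the same route as the paper: consistency of each $G(U_\lambda)$ via Lemma \ref{L01}, monotonicity via Lemma \ref{L203}, and consistency of the union by placing \#$A$ and \#[$\neg A$] into a single $G(U_\rho)$ using the increasing property. One minor remark: the claim $U_0\supseteq W$ is not given by the hypotheses (it holds only for the $G$-sequences of Theorem \ref{T2}), but it is also not needed, since for $\lambda>0$ strict increase already makes $U_\lambda$ nonempty, and your explicit handling of the nonemptiness condition of Lemma \ref{L01} is in fact more careful than the paper's own proof, which invokes that lemma without comment.
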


\begin{proof} (a) Consistency of the sets $G(U_\lambda)$, $\lambda<\alpha$, follows from Lemma \ref{L01} because the sets  $U_\lambda$, $\lambda<\alpha$, are consistent.

Because $U_\mu\subset U_\nu$ whenever $\mu<\nu<\alpha$, then $G(U_\mu)\subseteq G(U_\nu)$ whenever $\mu<\nu<\alpha$, by Lemma \ref{L203}, whence the sequence  $(G(U_\lambda))_{\lambda<\alpha}$  is increasing. This proves (a).
\smallskip

To prove that the set $\underset{\lambda<\alpha}{\bigcup}G(U_\lambda)$  is consistent, assume on the contrary that there exists such a sentence $A$ in $\mathcal L$ that  both \#$A$ and \#[$\neg A$] are in $\underset{\lambda<\alpha}{\bigcup}G(U_\lambda)$.
Thus there exist $\mu,\,\nu<\alpha$ such that \#$A$ is in $G(U_\mu)$ and \#[$\neg A$] is in $G(U_\nu)$. Because
$G(U_\mu)\subseteq G(U_\nu)$ or $G(U_\nu)\subseteq G(U_\mu)$, then both  \#$A$ and \#[$\neg A$] are in $G(U_\mu)$ or in $G(U_\nu)$.
But this is impossible, since both $G(U_\mu)$ and $G(U_\nu)$ are consistent by (a). Thus, the set $\underset{\lambda<\alpha}{\bigcup}G(U_\lambda)$  is consistent. This proves the conclusion of (b).
\end{proof}


\begin{theorem}\label{T2} 
 The union of those transfinite sequences 
$(U_\lambda)_{\lambda<\alpha}$ of $\mathcal C$ which satisfy
\begin{itemize}
\item[(C)] $(U_\lambda)_{\lambda<\alpha}$ is strictly increasing,  
$U_0=W$, and if $0<\mu< \alpha$, then
$U_\mu = \underset{\lambda<\mu}{\bigcup}G(U_\lambda)$
\end{itemize}  
is a transfinite sequence. It has the last member, which is the smallest consistent subset $U$ of $D$  which satisfies $U=G(U)$. 
\end{theorem}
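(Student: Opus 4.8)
The plan is to build the union sequence explicitly by transfinite recursion and then argue it must terminate, with the fixed point emerging at the end. I would proceed in four stages.

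\emph{Stage 1: construct a canonical sequence by transfinite recursion.} Define $U_0 = W$ and, for $\mu > 0$, $U_\mu = \bigcup_{\lambda<\mu} G(U_\lambda)$, continuing as long as the sequence stays strictly increasing. First check $W$ is consistent: since $L$ is fully interpreted, $A$ and $\neg A$ cannot both be true sentences of $L$, so $W\in\mathcal C$. Then prove by transfinite induction that each $U_\mu$ so defined lies in $\mathcal C$: at successor-style and limit-style stages alike, $U_\mu$ is a union of sets $G(U_\lambda)$ over an initial segment, and by Lemma~\ref{L204}(b) (whose hypothesis, that $(U_\lambda)_{\lambda<\mu}$ is strictly increasing, holds by construction up to the point we have defined) this union is consistent. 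So as long as strict increase persists, we get a well-defined strictly increasing sequence in $\mathcal C$ satisfying (C).

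\emph{Stage 2: termination.} Every $U_\lambda$ is a subset of the fixed countable set $D$, and a strictly increasing transfinite sequence of subsets of $D$ cannot be indexed by an ordinal of cardinality exceeding $|\mathcal P(D)|$; more simply, a strictly increasing ordinal-indexed chain of subsets of a set of size $\kappa$ has length at most $\kappa^+$. Hence the recursion of Stage 1 cannot continue through all ordinals: there is a least ordinal $\gamma$ at which strict increase fails, i.e. where $U_\gamma = \bigcup_{\lambda<\gamma} G(U_\lambda)$ would not strictly extend the earlier terms. I would take the sequence $(U_\lambda)_{\lambda<\gamma}$ together with its ``would-be'' last term and show this is exactly the union described in the theorem statement, and that the union of all sequences satisfying (C) coincides with this one (any two sequences satisfying (C) agree on their common domain by a straightforward transfinite-induction uniqueness argument, since the recursion clause determines each term from its predecessors).

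\emph{Stage 3: identify the last member as a fixed point.} Let $U$ be the last member. By construction $U = \bigcup_{\lambda<\gamma} G(U_\lambda)$ (or $U = G(U_{\gamma-1})$ in the successor case). Monotonicity (Lemma~\ref{L203}) gives $G(U_\lambda)\subseteq G(U)$ for each $\lambda<\gamma$, hence $U\subseteq G(U)$. For the reverse inclusion, the failure of strict increase at $\gamma$ means $\bigcup_{\lambda<\gamma}G(U_\lambda)$ does not properly contain some $U_\nu$ with $\nu<\gamma$ large; unwinding the definitions, this forces $G(U) = \bigcup_{\lambda<\gamma}G(U_\lambda) = U$ — because $U$ itself occurs as one of the terms being unioned (or is $U_{\gamma-1}$), so $G(U)$ is already included in the union defining $U$. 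Consistency of $U$ comes from Stage 1 (or from Lemma~\ref{L204}(b) applied to the sequence up to $\gamma$). Thus $U\in\mathcal C$ and $U = G(U)$.

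\emph{Stage 4: minimality.} Suppose $V\in\mathcal C$ with $V = G(V)$. Every fixed point contains $W$: indeed $W = G_0(\emptyset)\subseteq G_0(V)\subseteq G(V) = V$ using that $W\subseteq G_0(V)$ by \eqref{E201}. Now prove by transfinite induction that $U_\lambda\subseteq V$ for all $\lambda<\gamma$: the base case is $U_0 = W\subseteq V$, and if $U_\lambda\subseteq V$ for all $\lambda<\mu$ then by Lemma~\ref{L203} $G(U_\lambda)\subseteq G(V) = V$, so $U_\mu = \bigcup_{\lambda<\mu}G(U_\lambda)\subseteq V$. Taking $\lambda$ through all stages up to the last gives $U\subseteq V$. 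Hence $U$ is the smallest consistent fixed point. The main obstacle I anticipate is Stage 3 — pinning down precisely what ``strict increase fails at $\gamma$'' yields, and checking carefully that it delivers $G(U)\subseteq U$ rather than merely $G(U)\subseteq$ some later term; this requires being scrupulous about whether $\gamma$ is a limit or successor and about how the ``last member'' is defined when the recursion stalls.
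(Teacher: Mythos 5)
Your proposal is correct in substance but reaches the conclusion by a genuinely different route from the paper. The paper never runs a single recursion with a halting condition: it considers the family of all sequences satisfying (C) (``$G$-sequences''), proves they are pairwise nested (any two agree up to the length of the shorter), takes the union of these compatible functions to get one sequence $(U_\lambda)_{\lambda<\gamma}$, shows $\gamma$ must be a successor because a limit $\gamma$ would allow the union to be extended by $\bigcup_{\lambda<\gamma}G(U_\lambda)$ (consistency and monotonicity coming from Lemmas \ref{L203} and \ref{L204}, exactly as in your Stage 1), and then gets $U_\alpha\subseteq G(U_\alpha)$ with equality because a proper inclusion would again permit an extension of the maximal sequence; minimality is proved by the same transfinite induction you give in Stage 4. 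You instead build the canonical sequence directly by transfinite recursion and force termination with a cardinality bound on strictly increasing chains of subsets of $D$; this is a legitimate and in one respect cleaner argument, since it makes explicit why the process stops (equivalently, why the class of $G$-sequences has bounded length and forms a set), a point the paper leaves implicit behind ``the union of all $G$-sequences cannot be extended.'' (In fact the sharper bound is available: each successor step adds a fresh element of the countable set $D$, so the length is a countable ordinal.) Your uniqueness remark in Stage 2 is precisely the paper's nestedness property, so identifying your canonical sequence with the union in the statement is unproblematic. The obstacle you flag in Stage 3 resolves cleanly and you should spell it out: since $U_\nu\subset U_{\nu+1}\subseteq U_\gamma$ for every $\nu<\gamma$, the ``would-be'' term $U_\gamma=\bigcup_{\lambda<\gamma}G(U_\lambda)$ cannot coincide with an earlier term when $\gamma$ is a limit, so failure of strict increase forces $\gamma=\nu+1$ with $U_\gamma=U_\nu$; then $G(U_\nu)\subseteq\bigcup_{\lambda\le\nu}G(U_\lambda)=U_\gamma=U_\nu$, while $U_\nu\subseteq G(U_\nu)$ by Lemma \ref{L203}, giving the fixed point $U_\nu=G(U_\nu)$ as the genuine last member. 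With that paragraph added, your proof is complete and buys a more explicit termination argument, whereas the paper's union-of-all-sequences formulation matches the statement of Theorem \ref{T2} more literally.
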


\begin{proof} 
Those transfinite sequences of $\mathcal C$ which satisfy condition (C)
are called  $G$-sequences. 
We shall first show that $G$-sequences are nested:
\begin{enumerate}
\item[(1)] {\it Assume that $(U_\lambda)_{\lambda<\alpha}$ and $(V_\lambda)_{\lambda<\beta}$ are $G$-sequences, and that $\{U_\lambda\}_{\lambda<\alpha} \not\subseteq \{V_\lambda\}_{\lambda<\beta}$.
Then $(V_\lambda)_{\lambda<\beta}  = (U_\lambda)_{\lambda<\beta}$}.
\end{enumerate}
By the assumption of (1)  $\mu = \min \{\lambda<\alpha\mid U_\lambda\not\in\{V_\lambda\}_{\lambda<\beta}\}$ exists, and 
$\{U_\lambda\}_{\lambda<\mu} \subseteq \{V_\lambda\}_{\lambda<\beta}$.  
Properties (C) imply by transfinite induction that $U_\lambda=V_\lambda$ for each $\lambda<\mu$. To prove that $\mu=\beta$,
 make a counter-hypothesis: $\mu<\beta$. Since $\mu<\alpha$ and $U_\lambda=V_\lambda$ for each $\lambda<\mu$, it follows from properties (C) that   $U_\mu = \underset{\lambda<\mu}{\bigcup}G(U_\lambda) = \underset{\lambda<\mu}{\bigcup}G(V_\lambda)=V_\mu$, which is
impossible, since $V_\mu\in \{V_\lambda\}_{\lambda<\beta}$, but $U_\mu\not\in \{V_\lambda\}_{\lambda<\beta}$. Consequently, $\mu=\beta$ and $U_\lambda=V_\lambda$ for each $\lambda<\beta$, whence
 $(V_\lambda)_{\lambda<\beta} =(U_\lambda)_{\lambda<\beta}$.

By definition, every $G$-sequence $(U_\lambda)_{\lambda<\alpha}$ is a function $\lambda\mapsto U_\lambda$ from $\alpha$ into $\mathcal C$. Property (1) implies that these functions are compatible. Thus their union is by \cite[Theorem 2.3.12]{[12]} a function with values in $\mathcal C$, the domain being the union of all index sets  of $G$-sequences. Because these index sets are ordinals, then their union is also an ordinal by \cite[I.8.10]{[Ku]}. Denote it by $\gamma$. The union function can be represented as a sequence $(U_\lambda)_{\lambda<\gamma}$ of $\mathcal C$. It is strictly increasing as a union of strictly increasing nested sequences.  

To show that $\gamma$ is a successor, assume on the contrary that $\gamma$ is a limit ordinal. Given $\nu<\gamma$, then $\mu=\nu+1$ and $\alpha=\mu+1$ are in $\gamma$, and $(U_\lambda)_{\lambda<\alpha}$ is a $G$-sequence. Denote $U_\gamma = \underset{\lambda<\gamma}{\bigcup}G(U_\lambda)$.
 $G$ is order preserving  by Lemma \ref{L203}, and $(U_\lambda)_{\lambda<\gamma}$ is a strictly increasing sequence of $\mathcal C$. Thus  $(G(U_\lambda))_{\lambda<\gamma}$ is increasing by  Lemma \ref{L204}(a), and $U_\gamma$ is consistent by Lemma \ref{L204}(b). Moreover, 
 $U_\nu\subset U_\mu=\underset{\lambda<\mu}{\bigcup}G(U_\lambda)\subseteq U_\gamma$. This is true for each $\nu<\gamma$, whence $(U_\lambda)_{\lambda<\gamma+1}$ is a $G$-sequence. This is impossible, since $(U_\lambda)_{\lambda<\gamma}$ is the union of all $G$-sequences.
Consequently, $\gamma$ is a successor, say $\gamma=\alpha+1$. Thus $U_\alpha$ is the last member of $(U_\lambda)_{\lambda<\gamma}$,
$U_\alpha=\max\{U_\lambda\}_{\lambda<\gamma}$, and $G(U_\alpha)=\max\{G(U_\lambda)\}_{\lambda<\gamma}$. Moreover, $(U_\lambda)_{\lambda<\gamma}$ is a $G$-sequence, for otherwise $(U_\lambda)_{\lambda<\alpha}$ would be the union of all $G$-sequences. In particular,
$U_\alpha=\underset{\lambda<\alpha}{\bigcup}G(U_\lambda)\subseteq \underset{\lambda<\gamma}{\bigcup}G(U_\lambda)=G(U_\alpha)$,
so that $U_\alpha\subseteq G(U_\alpha)$. This inclusion cannot be proper, since then the longest $G$-sequence $(U_\lambda)_{\lambda<\gamma}$ could be extended by  $U_\gamma= \underset{\lambda<\gamma}{\bigcup}G(U_\lambda)$. Consequently, $U_\alpha = G(U_\alpha)$.

Assume that $U$ is a consistent subset of $D$, and that $U=G(U)$. Then $U_0=W=G(\emptyset)\subseteq G(U)=U$. If $0<\mu<\gamma$, and $U_\lambda\subseteq U$ for each $\lambda<\mu$, then $G(U_\lambda)\subseteq G(U)$ for each $\lambda<\mu$, whence $U_\mu=\underset{\lambda<\mu}{\bigcup}G(U_\lambda)\subseteq G(U)=U$. Thus, by transfinite induction, $U_\mu\subseteq U$ for each $\mu<\gamma$. This proves the last assertion of Theorem.
\end{proof}

{}

\end{document}